\newcommand{\C}{\mathbb{C}}
\newcommand{\D}{\mathbb{D}}
\newcommand{\T}{\mathbb{T}}
\newcommand{\Lcont}{\mathcal{L}}
\newcommand{\norm}[1]{\left\Vert #1\right\Vert}
\newcommand{\abs}[1]{\left\lvert #1 \right\rvert}
\newcommand{\scal}[2]{\ensuremath{\left\langle #1|#2 \right\rangle}\xspace}
\newcommand{\Span}{\operatorname{span}}
\theoremstyle{plain}
\newtheorem{statement}{}[section]
\newtheorem{theo}[statement]{Theorem}
\newtheorem{coro}[statement]{Corollary}
\newtheorem{lem}[statement]{Lemma}
\theoremstyle{definition}
\newtheorem{rem}[statement]{Remark}
\newtheorem{expl}[statement]{Example}
\title[Cyclicity of the shift and a completeness problem]{Cyclicity of the shift operator and a related completeness problem in De Branges-Rovnyak spaces
}
\author{Emmanuel Fricain and Romain Lebreton}
\address{Laboratoire Paul Painlevé, Université de Lille, 59655 Villeneuve d'Ascq Cédex}
\email{emmanuel.fricain@univ-lille.fr}
\email{romain.lebreton@univ-lille.fr}
\keywords{cyclicity, De Branges-Rovnyak spaces, shift operator, Cauchy transform}
\subjclass[2020]{30J05, 30H10, 46E22}
\thanks{The authors were supported by the Labex CEMPI (ANR-11-LABX-0007-01)}
\begin{document}

\begin{abstract}
In this paper, we study the cyclic vectors of the shift operator $S_b$ acting on de Branges--Rovnyak space $\mathcal H(b)$ associated to a non-extreme point of the closed unit ball of $H^\infty$. We highlight an interesting link with a completeness problem that we study using the Cauchy transform. This enables us to obtain some nice consequences on cyclicity. 
\end{abstract}

\maketitle

\section{Introduction}
If $T$ is a bounded linear operator on a Banach space $\mathcal X$, then $T$ is called {\emph{cyclic}} if there exists a vector $x\in\mathcal X$ such that the orbit of $x$ under $T$, that is $\{T^n x:n\geq 0\}$, is dense in $\mathcal X$. Such a vector (if it exists) is called a {\emph{cyclic vector}} for $T$. The characterization of cyclic vectors for a given operator is a challenging problem. It has been completely solved by A. Beurling \cite{Beurling} for the (forward) shift operator $S$ on the Hardy space of the unit disc $H^2$. A function $f$ in $H^2$ is cyclic for $S$ if and only if $f$ is an outer function. A similar question can be stated (and has been studied) in various Banach spaces of analytic functions where the shift operator acts boundedly, e.g. in Bergman or Dirichlet spaces. However, the situation in Hardy space is unique in the sense that in most other spaces, there are no known characterizations despite numerous efforts by many mathematicians. In \cite{BrownShields}, L. Brown and A. Shields conjectured that a function $f$ in the Dirichlet space $\mathcal D$ is cyclic if and only if $f$ is outer and its boundary zero set is of logarithmic capacity zero. This conjecture is still open despite significant progress. See \cite{CyclDirichlet} in particular. 

Recently, some authors began to be interested in cyclic vectors of the shift operator on de Branges--Rovnyak spaces $\mathcal H(b)$ \cite{InvSubHbForm,CyclBG,CyclNonExtr}. Recall that when $\|b\|_\infty\leq 1$ and $\log(1-|b|)\in L^1(\mathbb T)$ (corresponding to the non-extreme point of the closed unit ball of $H^\infty$), the space $\mathcal H(b)$ is invariant with respect to the shift operator $S_b$ and since the polynomials are dense in this case, the operator $S_b$ is cyclic (with the constant $1$ as a cyclic vector). It is then natural to ask if one can characterize all the cyclic vectors of $S_b$. In \cite{CyclFG}, the first author with S. Grivaux gave a characterization of the cyclic vectors of $S_b$ when $b$ is a rational function in the closed unit ball of $H^\infty$ but not a finite Blaschke product, which generalizes some results of \cite{CyclNonExtr}. They also gave some sufficient conditions when $b=(1+I)/2$, where $I$ is an inner function. In \cite{CyclBG}, A. Bergman gave deep results based on a theoretic description of the invariant subspaces given by A. Aleman and B. Malman \cite{InvSubHbForm}. In particular, he completely solved the problem when $b=(1+I)/2$, where $I$ is an inner function, with a nice characterization of cyclic vectors in terms of the Aleksandrov--Clark measures of $b$. 

In this paper, we shed new light to this problem of characterizing  the cyclic vectors of $S_b$ by making a link with the problem of completeness in $\mathcal H(b)$ of families of the form $\{fk_{\lambda_n}:n\geq 1\}$ where $f$ is a fixed function in $\mathcal H(b)$, $(\lambda_n)_{n\geq 1}$ is a sequence of points in $\mathbb D$ which is not a Blaschke sequence, and $k_{\lambda_n}$ is the Cauchy kernel of $H^2$ at point $\lambda_n$. A difficulty with de Branges--Rovnyak spaces is the computation of the norm because it is not given directly by an integral. In the non-extreme case, usually one need to solve the equation $T_{\bar b}f=T_{\bar a}f^+$, where $f$ is given in $\mathcal H(b)$ and $T_{\bar b}$ and $T_{\bar a}$ are the Toeplitz operators of symbols $\bar{b}$ and $\bar{a}$ respectively and $a$ is the Pythagorean mate of $b$. See Subsection 2.1 for more details. In particular, when one studies directly the cyclicity, we are faced with the difficulty of getting a tractable formula for $(pf)^+$ (where $p$ is a polynomial), and this is not an easy task! However, we shall see that we can obtain a nice formula for $(fk_{\lambda})^+$, which will help us, using Cauchy transform, to give a sufficient and simple condition for completeness, and then deduce some nice results for cyclicity.

The paper is organized as follows. In Section 2, we recall some useful properties of the de Branges--Rovnyak spaces and the Cauchy transform. We also give a key lemma for the study of our completeness problem. In Section 3, we introduce a completeness problem related to the cyclic vectors for the shift operator. First of all, this issue will be considered in the Hardy space $H^2$, and then in the de Branges--Rovnyak spaces $\mathcal{H}(b)$. In Section 4, we prove our main result which gives a sufficient condition to solve our completeness problem. Finally, the last section contains several consequences of our approach on the cyclic vectors for $S_b$.

In the rest of the paper, if $A$ is a family of vectors in a Hilbert space $\mathcal H$, we shall denote by $\bigvee(A)$ the subspace consisting of finite linear combinations of elements of $A$ and by $\Span_{\mathcal H}(A)$ its closure in $\mathcal H$.

\section{Preliminaries and useful analytic tools}\label{Preliminaries}

\subsection{De Branges-Rovnyak spaces \texorpdfstring{$\mathcal{H}(b)$}{Hb}}

Let $\D = \{z \in \C : \abs{z} < 1\}$ be the open unit disk in the complex plane and let $\T = \{z \in \C : \abs{z} = 1 \}$ its boundary endowed with normalized Lebesgue measure $m$. For $0<p<\infty$, we consider the Hardy space $H^p=H^p(\D)$ which consists of functions $f$ holomorphic on $\D$ satisfying
\[
\norm{f}_p= \sup_{0 \le r < 1}\left(\int_{\T} \abs{f(r\zeta)}^p\,dm(\zeta)\right)^{1/p}<\infty.
\]
We also define $H^\infty = H^\infty(\D)$ to be the class of bounded analytic functions on $\D$, endowed with the sup norm defined by $\norm{f}_\infty = \sup_{z \in \D} \abs{f(z)}$.  Throughout this paper, as usual, for every $1\leq p\leq \infty$, we identify (via radial limits) $H^p(\D)$ with the closed subspace of $L^p(\T)$ defined as $H^p(\T) := \{f \in L^p(\T) : \widehat{f}(n) = 0,\,\forall n < 0 \}$. Recall that for $p=2$, $H^2$ is a reproducing kernel Hilbert space whose kernel is given by 
\[
k_\lambda(z) = \frac{1}{1 - \overline{\lambda}z}, \quad \lambda, z \in \D,
\]
meaning that 
\[
\scal{f}{k_\lambda}_2=f(\lambda),\qquad f\in H^2,\lambda \in\D,
\]
and $\scal{\cdot}{\cdot}_2$ is the usual scalar product of $L^2(\T)$.

For $\varphi\in L^\infty(\T)$, the Toeplitz operator $T_\varphi:H^2\to H^2$, defined by $T_\varphi f = P_+(\varphi f)$ for every $f \in H^2$, is a bounded operator on $H^2$ of norm equals to $\|\varphi\|_\infty$. Here $P_+$ is the orthogonal projection from $L^2(\T)$ to $H^2$.

To every non-constant function $b$ in the closed unit ball of $H^\infty$, we associate the de Branges--Rovnyak space $\mathcal H(b)$ 
defined as the reproducing kernel Hilbert space on $\D$  with positive definite kernel given by 
\[
k_\lambda^b(z) = \frac{1 - \overline{b(\lambda)}b(z)}{1 - \overline{\lambda}z},\qquad \lambda,z \in \D.
\]
The norm in $\mathcal H(b)$ is denoted by $\|\cdot\|_b$ and the scalar product by $\scal{\cdot}{\cdot}_b$. In this paper, we shall be interested in the cyclicity of the shift operator on $\mathcal H(b)$. Let $S$ be the shift operator on $H^2$ defined as $(Sf)(z)=zf(z)$, $z\in\D$, $f\in H^2$. It is well-known that $\mathcal H(b)$ is invariant with respect to $S$ if and only if $\log(1-|b|)\in L^1(\T)$ \cite[Corollary 20.20]{DBR2}.

So {\bf from now on}, we will assume that $b$ is a non-constant function in the closed unit ball of $H^\infty$ which satisfies $\log(1-|b|)\in L^1(\T)$. In this case, there is a unique outer function $a \in H^\infty$ such that $a(0) > 0$ and $\abs{a}^2 + \abs{b}^2 = 1$ almost everywhere on $\T$. The function $a$ is called the \textit{Pythagorean mate} of $b$ and we say that $(a,b)$ forms a \textit{Pythagorean pair}. Let us recall a useful characterization of the membership to $\mathcal H(b)$ when $\log(1-|b|)\in L^1(\T)$: let $f\in H^2$, then 
\begin{equation}\label{eq:carac-membership-Hb}
f\in\mathcal H(b)\Longleftrightarrow \exists f^+\in H^2,\, T_{\bar b}f=T_{\bar a}f^+.
\end{equation}
Moreover, in this case, the function $f^+$ is unique (since $a$ is outer, then $T_{\bar a}$ is one-to-one) and 
\begin{equation}\label{caraHbNExt}
\|f\|_b^2=\|f\|_2^2+\|f^+\|_2^2.
\end{equation}
See \cite[Corollary 25.10]{DBR2}. If $f\in\mathcal H(b)$, according to \eqref{eq:carac-membership-Hb}, there exists a (unique) function $g\in H^2$ such that 
\begin{equation}\label{eq223:carac-membership-Hb}
\bar{b}f=\bar{a}f^+ +\overline{zg},\qquad \mbox{a.e. on }\T.
\end{equation}
It is also known that for every $\lambda\in\D$, $k_\lambda$ and $bk_\lambda$ belong to $\mathcal H(b)$, and for $f\in\mathcal H(b)$, we have
\begin{equation}\label{DecompScalfNoyrep}
    \scal{f}{k_\lambda}_b = f(\lambda) + \frac{b(\lambda)}{a(\lambda)}f^+(\lambda) \quad\text{and} \quad \scal{f}{bk_\lambda}_b = \frac{f^+(\lambda)}{a(\lambda)}.
\end{equation}
We  also recall that the set of polynomials $\mathcal P$ is dense in $\mathcal H(b)$ \cite[Theorem 23.13]{DBR2}. Moreover, if $(\lambda_n)_{n\geq 1}$ is a sequence of points in $\mathbb D$, then $\{k_{\lambda_n}:n\geq 1\}$ is complete in $\mathcal H(b)$ if and only if $\sum_{n=1}^\infty(1-|\lambda_n|)=\infty$  \cite[Theorem 4.2]{Rim}.

An important tool in the study of a reproducing kernel Hilbert space is its associated multiplier algebra. We denote by 
\[
\mathfrak{M}(\mathcal{H}(b)) := \{ \varphi \in \mbox{Hol}(\D) : \varphi f \in \mathcal{H}(b),\,\forall f \in \mathcal{H}(b) \},
\]
the multiplier algebra of $\mathcal H(b)$. 
Using the closed graph theorem, it is easy to see that if $\varphi\in\mathfrak{M}(\mathcal{H}(b))$, then $M_\varphi$, the multiplication operator  by $\varphi$, is bounded on $\mathcal H(b)$. The algebra of multipliers is a Banach algebra when equipped with the norm $\norm{\varphi}_{\mathfrak{M}(\mathcal{H}(b))} = \norm{M_\varphi}_{\Lcont(\mathcal{H}(b))}$.  Here $\mathcal L(\mathcal H(b))$ is the space of all linear and bounded operators on $\mathcal H(b)$ endowed with its usual norm. Using standard arguments, we see that  $\mathcal H(b)\subset H^\infty\cap \mathcal H(b)$. In general this inclusion is strict. Moreover, when $\log(1-|b|)\in L^1(\mathbb T)$, it is known that there are plenty of multipliers. More precisely, we have $\mbox{Hol}(\overline{\mathbb D})\subset \mathfrak{M}(\mathcal H(b))$. See \cite[Theorem 24.6]{DBR2}. In particular, for every $\lambda\in\mathbb D$ and every polynomial $p$, the functions $k_\lambda$ and $p$ are both multipliers of $\mathcal H(b)$.

In the theory of de the Branges-Rovnyak spaces, an important tool is the concept of Alexandrov-Clark measures. Recall that given $b$ in the closed unit ball of $H^\infty$ and $\alpha\in\T$, there exists a unique finite positive Borel measure $\mu_\alpha$ on $\T$ such that 
\begin{equation}\label{eq:AC-measures}
\frac{1-|b(z)|^2}{|1-\overline{\alpha} b(z)|^2}=\int_\T \frac{1-|z|^2}{|z-\zeta|^2}\,d\mu_\alpha(\zeta),\qquad z\in\D.
\end{equation}
The collection $\{\mu_\alpha\}_{\alpha\in\T}$ is called the family of Alexandrov-Clark measures of $b$. When $\log(1-|b|)\in L^1(\T)$, denoting by 
\[
F_\alpha=\frac{a}{1-\overline{\alpha}b},\qquad \alpha\in \T,
\]
and using standard facts on Poisson integrals, we see that 
\begin{equation}\label{sdssdqsdsqdqs22391343ZM23}
|F_\alpha(\zeta)|^2=\frac{d\mu_\alpha^{(a)}}{dm}(\zeta),\qquad\mbox{for a.e. }\zeta\in\T,
\end{equation}
where $\mu_\alpha^{(a)}$ denotes the continuous part of the measure $\mu_\alpha$ and $d\mu_\alpha^{(a)}/dm$ denotes its Radon-Nikodym derivative with respect to Lebesgue measure $m$. In particular, $F_\alpha\in H^2$ for every $\alpha\in\T$. Now, for $\alpha\in\T$, define by 
\[
T_{\overline{F_\alpha}}k_\lambda=\overline{F_\alpha(\lambda)}k_\lambda,\qquad \lambda\in\D.
\]
Then $T_{\overline{F_\alpha}}$ is a densely defined operator on $H^2$. See \cite[Section 13.6]{DBR1} for more information on Toeplitz operators with symbols in $L^2(\T)$. We just mention here a useful result on the properties of $T_{1-\overline{\alpha}b}T_{\overline{F_\alpha}}$.
\begin{lem}\label{Lem-Tunbounded-Clark}
Let $\alpha\in\T$. Then
\begin{enumerate}
\item[(i)] $T_{1-\overline{\alpha}b}T_{\overline{F_\alpha}}$ is an isometry from $H^2$ into $\mathcal H(b)$, and we have
\[
T_{1-\overline{\alpha}b}T_{\overline{F_\alpha}}H^2=\Span_{\mathcal H(b)}((1-\overline{\alpha}b)z^n:n\geq 0).
\]
\item[(ii)] $T_{1-\overline{\alpha}b}T_{\overline{F_\alpha}}$ is onto if and only if $\mu_\alpha$ is absolutely continuous with respect to $m$.
\end{enumerate}
\end{lem}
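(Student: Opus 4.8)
The plan is to prove (i) by a reproducing-kernel computation that identifies $T_{1-\overline\alpha b}T_{\overline{F_\alpha}}$ on the Cauchy kernels $k_\lambda$, from which the range follows, and to deduce (ii) by describing the orthogonal complement of that range together with a Herglotz-type representation of $1/(1-\overline\alpha b)$.

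For (i), I would first record the closed forms $k_\lambda^+=\frac{\overline{b(\lambda)}}{\overline{a(\lambda)}}k_\lambda$ and $(bk_\lambda)^+=\bigl(\overline{a(\lambda)}^{-1}-a\bigr)k_\lambda$. These follow from the defining relation $T_{\bar b}f=T_{\bar a}f^+$: since $T_{\bar b}(bk_\lambda)=P_+(|b|^2k_\lambda)=k_\lambda-T_{\bar a}(ak_\lambda)$ and $T_{\bar a}k_\lambda=\overline{a(\lambda)}k_\lambda$, injectivity of $T_{\bar a}$ gives the formula for $(bk_\lambda)^+$; then $k_\lambda=k_\lambda^b+\overline{b(\lambda)}bk_\lambda$ together with $(k_\lambda^b)^+=\overline{b(\lambda)}ak_\lambda$ (obtained the same way) gives the one for $k_\lambda^+$. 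Feeding these into \eqref{DecompScalfNoyrep} I compute the four products $\scal{k_\lambda}{k_\mu}_b$, $\scal{k_\lambda}{bk_\mu}_b$, $\scal{bk_\lambda}{k_\mu}_b$, $\scal{bk_\lambda}{bk_\mu}_b$; since $T_{1-\overline\alpha b}T_{\overline{F_\alpha}}k_\lambda=\overline{F_\alpha(\lambda)}(1-\overline\alpha b)k_\lambda$, expanding $\scal{T_{1-\overline\alpha b}T_{\overline{F_\alpha}}k_\lambda}{T_{1-\overline\alpha b}T_{\overline{F_\alpha}}k_\mu}_b$ and using $F_\alpha(1-\overline\alpha b)=a$ collapses everything to $\tfrac1{1-\overline\lambda\mu}=\scal{k_\lambda}{k_\mu}_2$. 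As $\{k_\lambda:\lambda\in\D\}$ is total in $H^2$ and $T_{\overline{F_\alpha}}$ is closed (being the adjoint of the densely defined $T_{F_\alpha}$), this forces $T_{1-\overline\alpha b}T_{\overline{F_\alpha}}$ to be everywhere defined and isometric into $\mathcal H(b)$, acting by $f\mapsto(1-\overline\alpha b)P_+(\overline{F_\alpha}f)$ — a point I would check by approximating $f$ by polynomials. For the range, a direct computation gives $T_{1-\overline\alpha b}T_{\overline{F_\alpha}}z^n=(1-\overline\alpha b)\sum_{k=0}^n\overline{\widehat{F_\alpha}(k)}\,z^{n-k}$, and since $\widehat{F_\alpha}(0)=F_\alpha(0)\neq0$ this triangular family linearly spans $(1-\overline\alpha b)\mathcal P$; the operator being an isometry (hence with closed range) and $\mathcal P$ being dense in $H^2$, the range equals the closure of $(1-\overline\alpha b)\mathcal P$ in $\mathcal H(b)$, i.e. $\Span_{\mathcal H(b)}((1-\overline\alpha b)z^n:n\geq0)$.

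For (ii), write $Z_\alpha:=T_{1-\overline\alpha b}T_{\overline{F_\alpha}}$ and $\mathcal K_\alpha:=Z_\alpha H^2=\Span_{\mathcal H(b)}((1-\overline\alpha b)z^n:n\geq0)$, so $Z_\alpha$ is onto iff $\mathcal K_\alpha^\perp=\ker Z_\alpha^\ast=\{0\}$. Using \eqref{DecompScalfNoyrep} again one gets $Z_\alpha^\ast g=F_\alpha g-\alpha g^+$ for $g\in\mathcal H(b)$. Next, from $\operatorname{Re}\frac{1+\overline\alpha b}{1-\overline\alpha b}=\frac{1-|b|^2}{|1-\overline\alpha b|^2}$ and \eqref{eq:AC-measures} one obtains the Herglotz identity $\frac1{1-\overline\alpha b(z)}=c+\int_\T\frac{d\mu_\alpha(\zeta)}{1-\overline\zeta z}$ on $\D$, with $c=-\alpha\overline{b(0)}/(1-\alpha\overline{b(0)})$. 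Splitting $\mu_\alpha=|F_\alpha|^2m+\sigma_\alpha$ via \eqref{sdssdqsdsqdqs22391343ZM23} ($\sigma_\alpha\geq0$ the singular part) and using $(1-\overline\alpha b)\int_\T\frac{|F_\alpha(\zeta)|^2}{1-\overline\zeta z}\,dm(\zeta)=(1-\overline\alpha b)P_+(\overline{F_\alpha}F_\alpha)=Z_\alpha F_\alpha$, one gets $Z_\alpha F_\alpha=1-c(1-\overline\alpha b)-(1-\overline\alpha b)\int_\T\frac{d\sigma_\alpha(\zeta)}{1-\overline\zeta z}$. If $\mu_\alpha$ is absolutely continuous ($\sigma_\alpha=0$), then $1=Z_\alpha F_\alpha+c(1-\overline\alpha b)\in\mathcal K_\alpha$; since $1$ is a cyclic vector for $S_b$ and $\mathcal K_\alpha$ is a closed $S_b$-invariant subspace, $\mathcal K_\alpha=\mathcal H(b)$ and $Z_\alpha$ is onto. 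If $\sigma_\alpha\neq0$, set $g:=(1-\overline\alpha b)\int_\T\frac{d\sigma_\alpha(\zeta)}{1-\overline\zeta z}=1-c(1-\overline\alpha b)-Z_\alpha F_\alpha\in\mathcal H(b)$; then $g(0)=(1-\overline\alpha b(0))\,\sigma_\alpha(\T)\neq0$, while $Z_\alpha^\ast g=Z_\alpha^\ast 1-cZ_\alpha^\ast(1-\overline\alpha b)-Z_\alpha^\ast Z_\alpha F_\alpha=\bigl(F_\alpha-\tfrac{\alpha\overline{b(0)}}{a(0)}\bigr)-c\cdot\tfrac{1-\alpha\overline{b(0)}}{a(0)}-F_\alpha=0$ by the choice of $c$ (here $Z_\alpha^\ast 1$ and $Z_\alpha^\ast(1-\overline\alpha b)$ are read off from $Z_\alpha^\ast g=F_\alpha g-\alpha g^+$ and the values $1^+$, $b^+$). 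Hence $g\in\mathcal K_\alpha^\perp\setminus\{0\}$ and $Z_\alpha$ fails to be onto.

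The technical heart — and the place I expect the most friction — is part (i): pinning down the companion formulas $k_\lambda^+$ and $(bk_\lambda)^+$ exactly and pushing the cancellations in $\scal{Z_\alpha k_\lambda}{Z_\alpha k_\mu}_b$ all the way through, together with the routine-but-necessary verification that the a priori only densely defined operator $T_{1-\overline\alpha b}T_{\overline{F_\alpha}}$ extends to an everywhere-defined isometry given by $f\mapsto(1-\overline\alpha b)P_+(\overline{F_\alpha}f)$. Once (i) and the Herglotz identity are in place, (ii) follows quickly from the cyclicity of the constant function $1$.
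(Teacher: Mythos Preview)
The paper does not actually prove this lemma: its ``proof'' is the single line \emph{See \cite[Theorem 24.23]{DBR2} and \cite[Theorem 29.16]{DBR2}}, so there is nothing to compare at the level of arguments. Your proposal, by contrast, is a genuine self-contained sketch, and it is essentially correct.

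Your reproducing-kernel route for (i) checks out: with $k_\lambda^+=\dfrac{\overline{b(\lambda)}}{\overline{a(\lambda)}}k_\lambda$ and $(bk_\lambda)^+=\bigl(\overline{a(\lambda)}^{-1}-a\bigr)k_\lambda$, the four inner products collapse exactly to
\[
\bigl\langle(1-\overline\alpha b)k_\lambda,(1-\overline\alpha b)k_\mu\bigr\rangle_b
=\frac{1}{1-\overline\lambda\mu}\cdot\frac{(1-\overline\alpha b(\mu))\,\overline{(1-\overline\alpha b(\lambda))}}{a(\mu)\,\overline{a(\lambda)}},
\]
which after multiplying by $\overline{F_\alpha(\lambda)}F_\alpha(\mu)$ gives $\langle k_\lambda,k_\mu\rangle_2$. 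The triangular argument for the range is clean since $F_\alpha(0)=a(0)/(1-\overline\alpha b(0))\neq0$. For (ii) your adjoint formula $Z_\alpha^\ast g=F_\alpha g-\alpha g^+$ is correct (it drops out of \eqref{DecompScalfNoyrep} and the identity $\tfrac{b-\alpha}{1-\overline\alpha b}=-\alpha$), and the Herglotz constant is indeed $c=-\alpha\overline{b(0)}/(1-\alpha\overline{b(0)})$, which is exactly what is needed to make $Z_\alpha^\ast g=0$ in the singular case. The $S_b$-invariance of $\mathcal K_\alpha$ is immediate from its description in (i), so the cyclicity of $1$ finishes the absolutely continuous direction.

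The one place to be careful --- which you already flag --- is the passage from ``isometric on $\bigvee\{k_\lambda\}$'' to ``everywhere defined on $H^2$ and equal to $f\mapsto(1-\overline\alpha b)P_+(\overline{F_\alpha}f)$''. The bounded extension $V$ exists by density; to identify $V$ with the (a priori unbounded) $T_{1-\overline\alpha b}T_{\overline{F_\alpha}}$ you should argue via closedness of $T_{\overline{F_\alpha}}$ (as the adjoint of the densely defined multiplication by $F_\alpha$) together with the fact that $T_{1-\overline\alpha b}$ is a bounded multiplier of $\mathcal H(b)$, hence has a bounded inverse on its range. This is routine but worth spelling out, since ``approximating by polynomials'' alone does not obviously control the unbounded $T_{\overline{F_\alpha}}$.
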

\begin{proof}
See \cite[Theorem 24.23]{DBR2} and \cite[Theorem 29.16]{DBR2}.
\end{proof}

An important subspace of $\mathcal H(b)$ is $\mathcal M(a)=aH^2$ equipped with the range norm
\[
\|af\|_{\mathcal M(a)}=\|f\|_2,\qquad f\in H^2.
\]
It is known \cite[Theorem 23.2]{DBR2} that $\mathcal M(a)$ is contractively contained into $\mathcal H(b)$, which means that for every $f\in H^2$, $af\in\mathcal H(b)$ and 
\begin{equation}\label{eq:Macontractivelycontained}
\|af\|_b\leq \|af\|_{\mathcal M(a)}=\|f\|_2.
\end{equation}
 Moreover, we can characterize the density of $aH^2$ in $\mathcal H(b)$ in terms of rigid functions. Recall that a function $f\in H^1$, $f\not\equiv 0$, is said to be {\emph{rigid}} if, for any function $g\in H^1$, $g\not\equiv 0$, the assumption
\[
\arg(g)=\arg(f),\qquad \mbox{a.e. on }\T,
\]
implies the existence of a positive real constant $\lambda$ such that $g=\lambda f$. It turns out that rigid functions in $H^1$, of unit norm, coincide with exposed points of the closed unit ball of $H^1$  \cite[Theorem 6.15]{DBR2}. Note that if $b(0)=0$ and if the Alexandrov-Clark measure $\mu_\alpha$ of $b$ is absolutely continuous with respect to $m$ for some $\alpha\in\T$, then using \eqref{eq:AC-measures} and \eqref{sdssdqsdsqdqs22391343ZM23}, we see that the function $F_\alpha=a/(1-\overline{\alpha}b)$ has a unit norm in $H^1$. Recall also that for almost every $\alpha\in\T$, $\mu_\alpha$ is absolutely continuous with respect to $m$  \cite[Theorem 24.19]{DBR2}.
\begin{theo}\label{thm-densite-ma-rigid}
Assume that $b(0)=0$ and let $\alpha\in\T$ such that $\mu_\alpha$ is absolutely continuous with respect to $m$. The following assertions are equivalent:
\begin{enumerate}
\item[(i)] the subspace $aH^2$ is dense in $\mathcal H(b)$;
\item[(ii)] the function $F_\alpha^2$ is rigid.
\end{enumerate}
\end{theo}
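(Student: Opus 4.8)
The plan is to transport the density problem from $\mathcal{H}(b)$ to $H^2$ by means of the unitary furnished by Lemma~\ref{Lem-Tunbounded-Clark}, and then to recognize the resulting condition as a classical rigidity statement. Since $\mu_\alpha\ll m$, Lemma~\ref{Lem-Tunbounded-Clark} gives that $V:=T_{1-\overline{\alpha}b}T_{\overline{F_\alpha}}$ is a unitary operator from $H^2$ onto $\mathcal{H}(b)$. As $1-\overline{\alpha}b\in H^\infty$ acts by multiplication, $Vh=(1-\overline{\alpha}b)T_{\overline{F_\alpha}}h$, and hence, dividing by $a$ and using $F_\alpha=a/(1-\overline{\alpha}b)$,
\[
\frac{Vh}{a}=\frac{1}{F_\alpha}T_{\overline{F_\alpha}}h\qquad(h\in H^2).
\]
Thus $Vh\in aH^2$ if and only if $T_{\overline{F_\alpha}}h\in F_\alpha H^2$, so that $aH^2=V(\mathcal{N})$ with $\mathcal{N}:=\{h\in H^2:Vh\in aH^2\}$; since $V$ is unitary, assertion~(i) is equivalent to $\mathcal{N}$ being dense in $H^2$.

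First I would identify $\mathcal{N}$. Note that $F_\alpha$ is outer: indeed $F_\alpha\in H^2$ and $F_\alpha(1-\overline{\alpha}b)=a$ is outer, so neither factor can carry a non-trivial inner part. Set $u_\alpha:=\overline{F_\alpha}/F_\alpha$, a unimodular function on $\mathbb{T}$. If $Vh=af$ with $f\in H^2$, the displayed identity forces $P_+(\overline{F_\alpha}h)=F_\alpha f$, i.e.\ $\overline{F_\alpha}h-F_\alpha f$ has vanishing Fourier coefficients of non-negative index; dividing by $\overline{F_\alpha}$ and using that $F_\alpha$ is outer together with Smirnov's theorem (an $N^+$-function with square-integrable boundary values lies in $H^2$), this rewrites as $h=\overline{u_\alpha}f+\overline{z\chi}$ for some $\chi\in H^2$, and conversely every such $h$ belongs to $\mathcal{N}$. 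Applying $P_+$ to this representation gives $h=T_{\overline{u_\alpha}}f$, so $\mathcal{N}=\operatorname{ran}T_{\overline{u_\alpha}}$. Since $T_{\overline{u_\alpha}}^{\,*}=T_{u_\alpha}$, it follows that $\mathcal{N}$ is dense in $H^2$ if and only if $\ker T_{u_\alpha}=\{0\}$.

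Finally, $b(0)=0$ forces $\|F_\alpha\|_2^2=\mu_\alpha(\mathbb{T})=\tfrac{1-|b(0)|^2}{|1-\overline{\alpha}b(0)|^2}=1$, so $F_\alpha^2$ is an outer function of unit norm in $H^1$ with $u_\alpha=\overline{F_\alpha^2}/|F_\alpha^2|$. The equivalence of $\ker T_{u_\alpha}=\{0\}$ with~(ii) is then the classical Toeplitz‑kernel description of rigid functions (= exposed points of the unit ball of $H^1$, by \cite[Theorem 6.15]{DBR2}): a non-zero outer function $g\in H^1$ is rigid if and only if $\ker T_{\overline{g}/|g|}=\{0\}$; taking $g=F_\alpha^2$ concludes the proof. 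The implication (ii)$\Rightarrow$(i) is elementary: if $0\neq g\in\ker T_{u_\alpha}$ and $h\in H^2$ is defined by $u_\alpha g=\overline{zh}$ (so $|h|=|g|$), then $zgh\in H^1$ has the same argument as $F_\alpha^2$ on $\mathbb{T}$ and equals $\tfrac{|g|^2}{|F_\alpha|^2}F_\alpha^2$, while $|g|^2$ is not a constant multiple of $|F_\alpha|^2$ — otherwise $g=cIF_\alpha$ with $I$ inner, whence $T_{u_\alpha}g=cP_+(I\overline{F_\alpha})=0$ would force $I\perp z^nF_\alpha$ for all $n\ge0$ and thus $I=0$ by outerness of $F_\alpha$ — so $F_\alpha^2$ is not rigid; contraposing yields (ii)$\Rightarrow$(i). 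The main obstacle is the reverse implication (i)$\Rightarrow$(ii), i.e.\ constructing a non-zero element of $\ker T_{u_\alpha}$ out of an arbitrary witness to non-rigidity, together with making the manipulations of the second paragraph fully rigorous (the operator $T_{\overline{F_\alpha}}$ is only densely defined and $\mathcal{N}$ need not be closed); one should also note that the tempting route through the known equivalence ``$aH^2$ dense $\iff a^2$ rigid'' and the factorization $F_\alpha^2=a^2/(1-\overline{\alpha}b)^2$ fails, because division by $(1-\overline{\alpha}b)^2$ does not preserve membership in $H^1$.
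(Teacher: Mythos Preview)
The paper does not prove this result; its entire proof is the citation ``See \cite[Corollary 29.4]{DBR2}''. There is therefore nothing in the paper to compare against, and your proposal is in effect an attempted reconstruction of the argument behind that reference.

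Your overall strategy is the standard one and is essentially what underlies the cited corollary: use the unitary $V$ of Lemma~\ref{Lem-Tunbounded-Clark} to transport the density question to $H^2$, identify $V^{-1}(aH^2)$ with the range of the bounded Toeplitz operator $T_{\overline{u_\alpha}}$ (unimodular symbol $u_\alpha=\overline{F_\alpha}/F_\alpha$), and finish with Sarason's Toeplitz--kernel characterisation of rigidity. You have correctly flagged the two soft spots, and they are genuine.

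The first --- that $T_{\overline{F_\alpha}}$ is only densely defined --- is real: in particular the inclusion $\operatorname{ran}T_{\overline{u_\alpha}}\subset\mathcal N$, which you merely assert (``and conversely every such $h$\dots''), does not follow from the manipulation you wrote. Multiplying $h=\overline{u_\alpha}f+\overline{z\chi}$ by $\overline{F_\alpha}$ sends $\overline{z\chi}$ to $\overline{zF_\alpha\chi}$, which lies only in $\overline{zH^1}$, so you cannot read off $T_{\overline{F_\alpha}}h=F_\alpha f$. A clean repair is to check the bounded-operator identity $V\,T_{\overline{u_\alpha}}=M_a$ (where $M_a:H^2\to\mathcal H(b)$, $f\mapsto af$, is a contraction by \eqref{eq:Macontractivelycontained}) on a dense set and extend by continuity.

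The second is a mis-citation: \cite[Theorem 6.15]{DBR2} only says rigid $=$ exposed; the statement you actually need --- an outer $g\in H^1$ is rigid iff $\ker T_{\overline{g}/|g|}=\{0\}$ --- is a separate result of Sarason. Once you grant it, both directions follow at once, and your final paragraph becomes redundant: the ``main obstacle'' you name is precisely the non-trivial half of Sarason's criterion, so treat it as an input rather than something to re-prove here. Your direct argument for the easy half (non-trivial kernel $\Rightarrow$ non-rigid) is correct.
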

\begin{proof}
See \cite[Corollary 29.4]{DBR2}.
\end{proof}

As already said, when $\log(1-|b|)\in L^1(\mathbb T)$, the de Branges--Rovnyak space $\mathcal H(b)$ is invariant with respect to $S$. We shall denote by $S_b$ the restriction of $S$ to $\mathcal H(b)$, and it is known \cite[Theorem 24.3]{DBR2} that 
\begin{equation}\label{norm-Sb}
\|S_b\|=\frac{\sqrt{1-|b(0)|^2}}{|a(0)|}>1.
\end{equation}
 We shall be interested in the cyclic vectors of $S_b$. Recall that a function $f\in\mathcal H(b)$ is said to be \emph{cyclic} for $S_b$ if
 \[
\Span_{\mathcal H(b)}(S_b^n f: n \ge 0)=\Span_{\mathcal H(b)}(z^n f : n \ge 0)=\mathcal{H}(b).
\] 
This is of course equivalent to say that the set $\{pf:\,p\in\mathcal P\}$ is dense in $\mathcal H(b)$. By density of polynomials in $\mathcal H(b)$, the constant function $1$ is clearly cyclic for $S_b$. Moreover, standard arguments show that a function $f\in\mathcal H(b)$ is cyclic for $S_b$ if and only if there exists a sequence of polynomials $(p_n)_n$ satisfying $\|1-p_nf\|_b\to 0$ as $n\to\infty$. In other words, it is sufficient to approximate the function $1$ to get cyclicity. See \cite[Lemma 3.1]{CyclFG}. We also recall the following known fact which follows easily from Beurling's result and the fact that $\mathcal H(b)$ is contractively contained in $H^2$.
\begin{lem}\label{outer-is-necessary}
Let $f\in\mathcal H(b)$. Assume that $f$ is cyclic for $S_b$. Then $f$ is outer.
\end{lem}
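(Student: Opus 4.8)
The plan is to push the approximation that happens in $\mathcal{H}(b)$ down into $H^2$, where Beurling's theorem applies directly. The two ingredients are already recorded in the excerpt: first, by \cite[Lemma 3.1]{CyclFG} (and the discussion preceding the statement), $f$ being cyclic for $S_b$ is equivalent to the existence of a sequence of polynomials $(p_n)_n$ with $\|1-p_nf\|_b\to 0$; second, $\mathcal{H}(b)$ is contractively contained in $H^2$, so $\|g\|_2\leq\|g\|_b$ for every $g\in\mathcal{H}(b)$.

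Combining these, from $\|1-p_nf\|_b\to 0$ I would obtain $\|1-p_nf\|_2\leq\|1-p_nf\|_b\to 0$, so the constant function $1$ lies in $\Span_{H^2}(z^nf:n\geq 0)$, the smallest closed $S$-invariant subspace of $H^2$ containing $f$. (Equivalently: the inclusion $\iota:\mathcal{H}(b)\to H^2$ is continuous with dense range, since it already contains all polynomials, so it carries the dense set $\bigvee(z^nf:n\geq 0)$ of $\mathcal{H}(b)$ onto a dense subset of $H^2$, i.e. $f$ is cyclic for $S$ on $H^2$.)

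Finally I would invoke Beurling's theorem \cite{Beurling}: the closed $S$-invariant subspace of $H^2$ generated by a nonzero $f\in H^2$ equals $\theta H^2$, where $\theta$ is the inner factor of $f$ (and $f$ is nonzero here, being cyclic). Since $1\in\theta H^2$ forces $\theta$ to be a unimodular constant, $f$ has no nontrivial inner factor, i.e. $f$ is outer. (Alternatively, one simply quotes the standard consequence that the cyclic vectors of $S$ on $H^2$ are exactly the outer functions.) There is essentially no obstacle in this argument; the only point requiring a little care is that the approximation takes place in the $\mathcal{H}(b)$-norm, which is precisely why one first passes through the contractive embedding into $H^2$ before Beurling's result can be brought to bear.
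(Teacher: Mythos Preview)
Your argument is correct and matches the paper's approach exactly: the paper does not spell out a proof but simply cites \cite[Lemma 3.2]{CyclFG}, noting just before the statement that the result ``follows easily from Beurling's result and the fact that $\mathcal H(b)$ is contractively contained in $H^2$,'' which is precisely your two-step reduction.
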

\begin{proof}
See \cite[Lemma 3.2]{CyclFG}.
\end{proof}
We end these preliminaries on $\mathcal H(b)$ spaces by a computation which will be a key result for the completeness problem related to the cyclic vectors for $S_b$. 
\begin{lem}\label{lemme-fkplus}
   Let $f \in \mathcal{H}(b)$ and $\lambda \in \D$. Then $fk_\lambda\in\mathcal H(b)$ and
    \begin{equation}\label{decompfnoy+}
        (fk_\lambda)^+ = f^+ k_\lambda +  \frac{\overline{\lambda g(\lambda)}}{\overline{a(\lambda)}}k_\lambda,
    \end{equation}
    where $g \in H^2$ is defined by \eqref{eq223:carac-membership-Hb}.
\end{lem}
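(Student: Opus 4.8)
The plan is to verify the formula directly from the membership criterion \eqref{eq:carac-membership-Hb} together with the boundary identity \eqref{eq223:carac-membership-Hb}. Set $h := f^+k_\lambda + \frac{\overline{\lambda g(\lambda)}}{\overline{a(\lambda)}}\,k_\lambda$. First note that $h \in H^2$: indeed $f^+ \in H^2$, $k_\lambda \in H^\infty$, and $a(\lambda)\neq 0$ since $a$ is outer. The claim that $fk_\lambda\in\mathcal H(b)$ with $(fk_\lambda)^+=h$ will follow once we show $T_{\overline b}(fk_\lambda)=T_{\overline a}h$: then \eqref{eq:carac-membership-Hb} applies, and the uniqueness of the ``$+$'' function (recalled right after \eqref{eq:carac-membership-Hb}, $T_{\overline a}$ being injective) identifies $(fk_\lambda)^+$ with $h$, which is exactly \eqref{decompfnoy+}. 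That $fk_\lambda\in\mathcal H(b)$ is in any case clear from $k_\lambda\in\mathrm{Hol}(\overline{\D})\subset\mathfrak M(\mathcal H(b))$.

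To establish $T_{\overline b}(fk_\lambda)=T_{\overline a}h$, I would multiply the a.e.\ identity $\overline b f=\overline a f^+ + \overline{zg}$ by the bounded function $k_\lambda$ and apply the orthogonal projection $P_+$ onto $H^2$. The contribution of $\overline a f^+k_\lambda$ is exactly $T_{\overline a}(f^+k_\lambda)$, since $f^+k_\lambda\in H^2$. The main point is to compute $P_+(\overline{zg}\,k_\lambda)$, and I expect $P_+(\overline{zg}\,k_\lambda)=\overline{\lambda g(\lambda)}\,k_\lambda$. The cleanest route I see is the partial-fraction decomposition $\frac{1}{z(1-\overline{\lambda}z)}=\frac{1}{z}+\frac{\overline{\lambda}}{1-\overline{\lambda}z}$ which, using $\overline z=1/z$ on $\T$, turns $\overline{zg}\,k_\lambda$ into $\overline{zg}+\overline{\lambda}\,\overline g\,k_\lambda$. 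The first summand lies in $\overline{zH^2}$ and is annihilated by $P_+$; the second gives $\overline{\lambda}\,P_+(\overline g\,k_\lambda)=\overline{\lambda}\,\overline{g(\lambda)}\,k_\lambda$ by the eigenvector identity $T_{\overline g}k_\lambda=\overline{g(\lambda)}k_\lambda$ for a co-analytic Toeplitz symbol — a one-line Fourier-coefficient verification, legitimate here because $g\in H^2$ and $k_\lambda\in H^\infty$ (so $\overline g\,k_\lambda\in L^2$). Finally, writing $\overline{\lambda g(\lambda)}\,k_\lambda=\frac{\overline{\lambda g(\lambda)}}{\overline{a(\lambda)}}\,\overline{a(\lambda)}\,k_\lambda=T_{\overline a}\big(\frac{\overline{\lambda g(\lambda)}}{\overline{a(\lambda)}}\,k_\lambda\big)$ via the same eigenvector property $T_{\overline a}k_\lambda=\overline{a(\lambda)}k_\lambda$, I assemble $T_{\overline b}(fk_\lambda)=T_{\overline a}(f^+k_\lambda)+T_{\overline a}\big(\frac{\overline{\lambda g(\lambda)}}{\overline{a(\lambda)}}\,k_\lambda\big)=T_{\overline a}h$, as wanted.

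I do not anticipate a genuine obstacle here; the computation is short once the partial-fraction trick is spotted. The only care needed is the bookkeeping with the merely $L^2$ symbols $g$ and $f^+$: one should check that each product written down is in $L^2$ (it is, as $b,a,k_\lambda\in H^\infty$) so that $P_+$ may be applied term by term. The substantive step is thus the evaluation $P_+(\overline{zg}\,k_\lambda)=\overline{\lambda g(\lambda)}\,k_\lambda$, after which \eqref{eq:carac-membership-Hb} and the uniqueness of $f^+$ close the argument.
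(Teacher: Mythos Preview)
Your proof is correct and follows essentially the same line as the paper's: both multiply the boundary identity \eqref{eq223:carac-membership-Hb} by $k_\lambda$, apply $P_+$, and reduce the question to evaluating $P_+(\overline{zg}\,k_\lambda)=\overline{\lambda g(\lambda)}\,k_\lambda$, after which the eigenvector identity $T_{\overline a}k_\lambda=\overline{a(\lambda)}k_\lambda$ finishes the job. The only cosmetic difference is that the paper computes $P_+(\overline{zg}\,k_\lambda)$ by pairing against an arbitrary $h\in H^\infty$, whereas you reach the same conclusion via the partial-fraction identity $\frac{1}{z(1-\overline\lambda z)}=\frac{1}{z}+\frac{\overline\lambda}{1-\overline\lambda z}$ together with $P_+(\overline g\,k_\lambda)=\overline{g(\lambda)}\,k_\lambda$.
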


\begin{proof}
First recall that $k_\lambda\in\mathfrak{M}(\mathcal H(b))$ and thus $fk_\lambda\in\mathcal H(b)$. Now, using \eqref{eq223:carac-membership-Hb}, we have 
 \[
 T_{\overline{b}} (fk_\lambda) = P_+(\overline{b}f k_\lambda) = P_+(\overline{a}f^+ k_\lambda + \overline{z}\overline{g}k_\lambda) = P_+(\overline{a}f^+ k_\lambda) + P_+(\overline{z}\overline{g} k_\lambda).
 \] 
Observe that $P_+(\overline{z} \overline{g}k_\lambda) = \overline{\lambda g(\lambda)}k_\lambda$. Indeed, for all $h \in H^\infty$, we have 
\[
\scal{h}{P_+(\overline{z}\overline{g}k_\lambda)}_2 = \scal{h}{\overline{z}\overline{g}k_\lambda}_2 = \scal{z g h}{k_\lambda}_2 = \lambda g(\lambda) h(\lambda) = \scal{h}{\overline{\lambda g(\lambda)}k_\lambda}_2,
\] 
and we conclude thanks to the density of $H^\infty$ in $H^2$. Using now the fact that $k_\lambda = T_{\overline{a}} \left (\frac{k_\lambda}{\overline{a(\lambda)}} \right)$, we get 
    \begin{equation*}
        T_{\overline{b}} (fk_\lambda) = T_{\overline{a}}(f^+ k_\lambda) + T_{\overline{a}} \left (\frac{\overline{\lambda g(\lambda)}}{\overline{a(\lambda)}}k_\lambda \right) = T_{\overline{a}} \left (f^+ k_\lambda + \frac{\overline{\lambda g(\lambda)}}{\overline{a(\lambda)}}k_\lambda \right). 
    \end{equation*}
 The definition of $(fk_\lambda)^+$ completes the proof. 
\end{proof}

\subsection{Cauchy transform and distribution function} 
A useful tool in the proof of our main result on cyclic vectors of $S_b$ will be the Cauchy transform. Recall that the Cauchy transform of a function $f \in L^1(\T)$ is defined by 
\[
C f(z) = \int_\T \frac{f(\zeta)}{1 - \overline{\zeta}z} ~dm(\zeta),\qquad z \in \C \backslash \T.
\]
Note that $Cf \in \mbox{Hol}(\C \backslash \T)$ and in particular, for $z \in \D$, we have
\[
Cf(z) = \sum_{n \ge 0} \widehat{f}(n) z^n.
\]
We refer the reader to \cite{CauchyTransf, DBR1} for more details about Cauchy transforms. We shall here just recall the main useful properties for us. It is not difficult to prove that for every $f\in L^1(\mathbb T)$, $Cf\in \bigcap_{0 < p < 1} H^p$, and so, in particular, $Cf$ belongs to the Smirnov class $\mathcal N^+$. In general, $Cf$ does not belong to $L^1(\mathbb T)$ but we have a weaker version due to A.N. Kolmogorov. For that purpose, let us recall that $L_0^{1, \infty}(\T)$ is the space of measurable function $h:\T\to\C$ satisfying 
\[
\lambda_h(t) = o\left(\frac{1}{t}\right),\qquad \mbox{as }t \to \infty,
\]
where $\lambda_h(t)=m\left ( \left \{ \zeta \in \T : \abs{h(\zeta)} > t \right \} \right)$, $t>0$, is the distribution function of $h$. It is easy to see that $L^1(\T)\subset L_0^{1, \infty}(\T)$, and if $h\in L_0^{1, \infty}(\T)$ and $\varphi\in L^\infty(\T)$, then $\varphi h\in L_0^{1, \infty}(\T)$.
\begin{theo}[Kolmogorov]\label{Kolmogorov} 
    Let $f \in L^1(\T)$. Then $Cf \in L_0^{1, \infty}(\T)$. 
\end{theo}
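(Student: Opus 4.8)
The plan is to deduce the statement from the classical Kolmogorov weak-type $(1,1)$ inequality
\[
\lambda_{Cf}(t)\le \frac{A\,\|f\|_1}{t},\qquad f\in L^1(\T),\ t>0,
\]
for an absolute constant $A$, which I would prove by a harmonic-measure argument in the right half-plane, and then upgrade it to the present ``little-oh'' statement by a density argument. Since the distribution function is subadditive up to a factor $2$, that is $\lambda_{g+h}(t)\le\lambda_{g}(t/2)+\lambda_{h}(t/2)$, and since $C$ is linear, I would first reduce the weak-type inequality to the case $f\ge 0$ by splitting $f$ into its four nonnegative pieces $u_1,\dots,u_4$ (the positive and negative parts of $\operatorname{Re}f$ and $\operatorname{Im}f$), which satisfy $\sum_j\|u_j\|_1\le 2\|f\|_1$.

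Assume then $f\ge 0$, $f\not\equiv 0$, set $c=\|f\|_1>0$ and $G=Cf$. Using $\frac{1}{1-\bar\zeta z}=\frac12\bigl(1+\frac{\zeta+z}{\zeta-z}\bigr)$ for $\zeta\in\T$, one gets $G(z)=\frac c2+\frac12\int_\T\frac{\zeta+z}{\zeta-z}f(\zeta)\,dm(\zeta)$, so $\operatorname{Re}G=\frac12\bigl(c+P[f]\bigr)\ge \frac c2>0$ on $\D$; hence $G$ maps $\D$ into the right half-plane $\Pi=\{\operatorname{Re}w>0\}$, with $G(0)=c$ and $\operatorname{Re}G^{*}=\frac12(c+f)$ a.e. on $\T$. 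Fix $t\ge 2c$ (for $t<2c$ the trivial bound $\lambda_{Cf}(t)\le 1$ already suffices). Then $\{\,|G^{*}|>t\,\}$ is contained in the union of $\{\operatorname{Re}G^{*}>t/2\}$, which has measure $m(\{f>t-c\})\le c/(t-c)\le 2c/t$ by Chebyshev, and of $\{\operatorname{Re}G^{*}\le t/2,\ |G^{*}|>t\}\subseteq\{\,G^{*}\in\Omega_t\,\}$ where $\Omega_t=\{w\in\Pi:\operatorname{Re}w\le t/2,\ |w|>t\}$ (using $\operatorname{Re}G^{*}>0$ a.e.).

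The crux is to bound $m(\{G^{*}\in\Omega_t\})$. Let $h$ be the bounded harmonic function on $\Pi$ whose boundary values on $i\R$ are $\mathbf 1_{\{iy:\,|y|>t/2\}}$; a direct integration gives $h(c)=\frac2\pi\arctan(2c/t)\le 4c/(\pi t)$. On the other hand the dilation $w\mapsto w/t$ carries $\Omega_t$ onto the fixed region $\Omega_{*}=\{\,0<\operatorname{Re}w\le\tfrac12,\ |w|>1\,\}$ and $h$ onto the harmonic function on $\Pi$ with boundary data $\mathbf 1_{\{|y|>1/2\}}$; since $\overline{\Omega_{*}}$ meets $i\R$ only along $\{iy:|y|\ge 1\}$, which avoids the jump points $\pm i/2$, this harmonic function is continuous and strictly positive on $\overline{\Omega_{*}}$ (and tends to $1$ at $\infty$ along $\Omega_{*}$), hence is bounded below there by an absolute constant $\kappa>0$; rescaling back, $h\ge\kappa$ on $\Omega_t$. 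As $h\circ G$ is a bounded harmonic function on $\D$ whose nontangential boundary values are $h\circ G^{*}$ a.e., we get
\[
\kappa\, m(\{G^{*}\in\Omega_t\})\le\int_{\T}(h\circ G)^{*}\,dm=(h\circ G)(0)=h(c)\le\frac{4c}{\pi t}.
\]
Adding the two pieces gives the weak-type inequality for $f\ge 0$, and then, via the reduction of the first paragraph, for every $f\in L^1(\T)$.

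Finally, to pass from $O(1/t)$ to $o(1/t)$: given $\varepsilon>0$, write $f=g+r$ with $g=f\mathbf 1_{\{|f|\le N\}}\in L^2(\T)$ and $\|r\|_1<\varepsilon$ (possible for $N$ large). Then $Cg\in H^2$, so $(Cg)^{*}=P_+g\in L^2(\T)\subseteq L^1(\T)$; and any $\psi\in L^1(\T)$ satisfies $t\,\lambda_\psi(t)\le\int_{\{|\psi|>t\}}|\psi|\,dm\to 0$ as $t\to\infty$, so $t\,\lambda_{Cg}(t)\to 0$. From $\lambda_{Cf}(t)\le\lambda_{Cg}(t/2)+\lambda_{Cr}(t/2)\le\lambda_{Cg}(t/2)+2A\varepsilon/t$ one gets $\limsup_{t\to\infty}t\,\lambda_{Cf}(t)\le 2A\varepsilon$, and letting $\varepsilon\to 0$ finishes the proof. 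The main obstacle is the half-plane estimate of the third paragraph — essentially Kolmogorov's theorem itself — the delicate point being that $\Omega_t$ comes arbitrarily close to $i\R$, so one must keep the gap between the conditions $\operatorname{Re}w\le t/2$ and $|w|>t$ exactly in order to keep $\overline{\Omega_t}$ away from the singularities of $h$. (Alternatively one can send $\Pi$ onto $\D$ by $w\mapsto(w-c)/(w+c)$ and estimate the harmonic measure of the resulting small lens near $1$, which amounts to the same computation.)
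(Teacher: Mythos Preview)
Your argument is correct, and in fact the paper gives no proof at all: it simply cites \cite[Proposition 3.4.11]{CauchyTransf}. What you have written is essentially the classical proof of Kolmogorov's theorem for the conjugate function, adapted to the Cauchy transform, followed by the standard density argument to pass from the weak-type $(1,1)$ bound to the little-$o$ statement. A couple of small remarks for clarity: the identity $(h\circ G)^*=h\circ G^*$ a.e.\ that you use is justified because $\operatorname{Re}G^*=\tfrac12(c+f)>0$ a.e., so $G^*$ lands in the open half-plane where $h$ is continuous; and the mean-value identity $(h\circ G)(0)=\int_\T (h\circ G)^*\,dm$ holds since $h\circ G$ is bounded harmonic on $\D$. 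The compactness argument for the lower bound $\kappa$ on $\Omega_*$ is fine once one notes, as you do, that $h_*$ extends continuously to $\overline{\Omega_*}\cap i\R\subset\{iy:|y|\ge 1\}$ (away from the jumps at $\pm i/2$) and tends to $1$ at infinity. Since the paper's ``proof'' is only a reference, your self-contained treatment is a genuine addition; it is perhaps worth observing that the cited source proceeds along the same lines (positive real part plus harmonic-measure estimate), so your approach matches the one the paper implicitly invokes.
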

\begin{proof}
See \cite[Proposition 3.4.11]{CauchyTransf}.
\end{proof}
In particular, the Cauchy transform $C$ maps $L^1(\T)$ into the space $H_0^{1,\infty}=L_0^{1, \infty}(\T)\cap\mathcal N^+$.

Two representations of functions in $H_0^{1,\infty}$ will be useful for us.
\begin{theo}[Aleksandrov]\label{Alexandrov}
Let $f \in H_0^{1, \infty}$. Then, for all $z \in \D$, we have
\begin{equation}\label{eq:Alexandrov1}
f(z) = \lim_{A \rightarrow + \infty} \int_{\abs{f} \le A} \frac{f(\zeta)}{1 - \overline{\zeta}z}\,dm(\zeta),
\end{equation}
and
\begin{equation}\label{eq:Alexandrov2}
f(0) = \lim_{A \rightarrow + \infty} \int_{\abs{f} \le A} \frac{f(\zeta)}{1 - \overline{z}\zeta}\,dm(\zeta).
\end{equation}
\end{theo}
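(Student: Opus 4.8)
The plan is to recover $f$ inside $\D$ from its boundary values by approximating $f$ from within by \emph{bounded} analytic functions, whose associated Cauchy integrals converge absolutely, and then comparing these with the truncated integrals. The starting point is that for any $h\in H^2$ — in particular for $h\in H^\infty$ — the reproducing kernel property of $H^2$ gives
\[
h(z)=\int_\T\frac{h(\zeta)}{1-\overline\zeta z}\,dm(\zeta),\qquad z\in\D,
\]
while expanding $(1-\overline z\zeta)^{-1}=\sum_{n\ge0}\overline z^{\,n}\zeta^n$ and using $\widehat h(-n)=0$ for $n\ge1$ yields
\[
h(0)=\int_\T\frac{h(\zeta)}{1-\overline z\zeta}\,dm(\zeta),\qquad z\in\D,
\]
both integrals being absolutely convergent. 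Hence it suffices to construct, for each $A>0$, a function $f_A\in H^\infty$ with $f_A(z)\to f(z)$ and $f_A(0)\to f(0)$ for every $z\in\D$ as $A\to\infty$, and such that
\[
\int_\T\frac{f_A(\zeta)}{1-\overline\zeta z}\,dm(\zeta)-\int_{\abs f\le A}\frac{f(\zeta)}{1-\overline\zeta z}\,dm(\zeta)\longrightarrow 0,\qquad
\int_\T\frac{f_A(\zeta)}{1-\overline z\zeta}\,dm(\zeta)-\int_{\abs f\le A}\frac{f(\zeta)}{1-\overline z\zeta}\,dm(\zeta)\longrightarrow 0 .
\]

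Since $f\in\mathcal N^+$ we have $\log\abs f\in L^1(\T)$, so for $A\ge1$ I take $w_A$ to be the outer function with $w_A(0)>0$ and $\abs{w_A}=\min\bigl(1,\,A/\abs f\bigr)$ a.e.\ on $\T$. Then $\norm{w_A}_\infty\le1$, and $f_A:=fw_A$ lies in $\mathcal N^+$ with boundary modulus $\min(\abs f,A)\le A$, so $f_A\in H^\infty$ (a Smirnov-class function with essentially bounded boundary values belongs to $H^\infty$). Furthermore $\log\abs{w_A}=-(\log\abs f-\log A)^+\to0$ a.e.\ and $\abs{\log\abs{w_A}}\le\log^+\abs f\in L^1(\T)$, so $\log\abs{w_A}\to0$ in $L^1(\T)$; since $\log w_A(z)=\int_\T\frac{\zeta+z}{\zeta-z}\log\abs{w_A(\zeta)}\,dm(\zeta)$ and $\zeta\mapsto\frac{\zeta+z}{\zeta-z}$ is bounded on $\T$ for fixed $z\in\D$, it follows that $w_A(z)\to1$, hence $f_A(z)=f(z)w_A(z)\to f(z)$ and likewise $f_A(0)\to f(0)$. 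Applying the two identities of the previous paragraph to $f_A\in H^\infty$ settles the ``main terms''.

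It remains to estimate the errors, and this is where I expect the only real difficulty to lie. Both kernels are bounded by $(1-\abs z)^{-1}$ on $\T$, so each of the two differences above is, up to the factor $(1-\abs z)^{-1}$, at most
\[
\int_{\abs f\le A}\abs{f}\,\abs{w_A-1}\,dm+\int_{\abs f>A}\abs{f_A}\,dm .
\]
The second term equals $A\,\lambda_f(A)$, which tends to $0$ precisely because $f\in L_0^{1,\infty}(\T)$. For the first term, note that on $\{\abs f\le A\}$ one has $\abs{w_A}=1$, hence $\abs{w_A-1}\le\abs{\arg w_A}=\bigl|\widetilde{\ell_A}\bigr|$ where $\ell_A:=(\log\abs f-\log A)^+$ and $\widetilde{\ell_A}$ is its conjugate function; by Cauchy--Schwarz and the $L^2$-boundedness of conjugation,
\[
\int_{\abs f\le A}\abs{f}\,\abs{w_A-1}\,dm\le\Bigl(\int_{\abs f\le A}\abs{f}^2\,dm\Bigr)^{1/2}\norm{\ell_A}_{L^2(\T)} .
\]
Writing $\varphi(t):=t\,\lambda_f(t)$, computations with the distribution function give $\int_{\abs f\le A}\abs{f}^2\,dm\le 2\int_0^A\varphi(t)\,dt$ and $\norm{\ell_A}_{L^2(\T)}^2\le\frac{2}{A}\sup_{t\ge A}\varphi(t)$, so the product above is at most $2\bigl(\frac1A\int_0^A\varphi(t)\,dt\bigr)^{1/2}\bigl(\sup_{t\ge A}\varphi(t)\bigr)^{1/2}$. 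Now $f\in L_0^{1,\infty}(\T)$ says exactly that $\varphi(t)\to0$ as $t\to\infty$ (and $\varphi$ is clearly bounded), so both $\frac1A\int_0^A\varphi(t)\,dt$ and $\sup_{t\ge A}\varphi(t)$ tend to $0$; the first term therefore tends to $0$ and the proof is finished. The delicate point is exactly this last estimate: it is the little-``$o$'' in the definition of $L_0^{1,\infty}(\T)$ (and not merely weak-$L^1$ membership) that forces those two averages to $0$, and the right choice of approximant — the outer function $w_A$ with $\abs{w_A}=\min(1,A/\abs f)$, so that only the \emph{argument} of $w_A$ has to be controlled on $\{\abs f\le A\}$ — is what makes the comparison work.
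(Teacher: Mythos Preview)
Your proof is correct. The paper does not give its own argument for this theorem; it simply refers to Aleksandrov's original paper and to Lemmas~5.13 and~5.22 of the Cima--Matheson--Ross monograph on the Cauchy transform. The outer-function truncation $w_A$ with $\abs{w_A}=\min(1,A/\abs f)$ that you use is exactly the device employed in those references, and your error analysis (splitting into the term $A\,\lambda_f(A)$ on $\{\abs f>A\}$ and controlling $\abs{w_A-1}$ on $\{\abs f\le A\}$ via the conjugate function of $(\log\abs f-\log A)^+$ together with the $L^2$ layer-cake estimates) is the standard route. So your proposal is essentially the proof behind the citations.
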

\begin{proof}
See \cite[Theorem 6]{Aleksandrov} and \cite[Lemma 5.13 and Lemma 5.22]{CauchyTransf}.
\end{proof}
We shall use the following simple consequence.
\begin{coro}\label{FctFctConjHcst}
Let $f, \overline{f} \in H_0^{1, \infty}$. Then $f$ is a constant function.
\end{coro}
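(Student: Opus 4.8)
The plan is to combine the two representation formulas of Aleksandrov's theorem (Theorem~\ref{Alexandrov}), applied respectively to $f$ and to $\overline f$, using the elementary observation that the truncation sets $\{\abs{f}\le A\}$ are insensitive to conjugation, since $\abs{\overline f}=\abs{f}$ a.e.\ on $\T$. Here $\overline f$ is understood as the element of $\mathcal N^+$ whose boundary values coincide a.e.\ with $\overline{f(\zeta)}$, so that both $f$ and $\overline f$ satisfy the hypotheses of Theorem~\ref{Alexandrov} by assumption, and in particular $\overline f(0)$ makes sense.

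First I would apply \eqref{eq:Alexandrov2} to $\overline f\in H_0^{1,\infty}$, which gives, for every $z\in\D$,
\[
\overline f(0)=\lim_{A\to+\infty}\int_{\abs{f}\le A}\frac{\overline{f(\zeta)}}{1-\overline z\zeta}\,dm(\zeta),
\]
where I have used $\{\abs{\overline f}\le A\}=\{\abs{f}\le A\}$. Taking complex conjugates and noting that $\overline{1-\overline z\zeta}=1-\overline\zeta z$, I obtain
\[
\overline{\overline f(0)}=\lim_{A\to+\infty}\int_{\abs{f}\le A}\frac{f(\zeta)}{1-\overline\zeta z}\,dm(\zeta).
\]
The right-hand side is precisely the limit occurring in \eqref{eq:Alexandrov1} for $f$, hence it equals $f(z)$. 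Therefore $f(z)=\overline{\overline f(0)}$ for all $z\in\D$, which exhibits $f$ as the constant function with value $\overline{\overline f(0)}$.

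There is no real obstacle here: the only points to check are that both $f$ and $\overline f$ lie in $H_0^{1,\infty}$ so that Theorem~\ref{Alexandrov} is applicable to each of them (which is exactly the hypothesis), the invariance of the truncation under conjugation, and the conjugation identity for the Cauchy-type kernel linking \eqref{eq:Alexandrov2} back to \eqref{eq:Alexandrov1}. No deeper machinery is needed.
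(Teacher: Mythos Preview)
Your proof is correct and follows essentially the same route as the paper's: combine the two Aleksandrov formulas \eqref{eq:Alexandrov1} and \eqref{eq:Alexandrov2} via conjugation and the identity $\{\abs{\overline f}\le A\}=\{\abs f\le A\}$. The only cosmetic difference is that the paper applies \eqref{eq:Alexandrov1} to $\overline f$ and \eqref{eq:Alexandrov2} to $f$, while you do the reverse, arriving at $f(z)=\overline{\overline f(0)}$ instead of $(\overline f)(z)=\overline{f(0)}$; these are the same argument in mirror image.
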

\begin{proof}
According to \eqref{eq:Alexandrov1}, since $\overline{f}\in H_0^{1, \infty}$, we have
\[
\overline{f(z)} = \lim_{A \rightarrow + \infty} \int_{\abs{\overline{f}} \le A} \frac{\overline{f(\zeta)}}{1 - \overline{\zeta}z}\,dm(\zeta) =\overline{ \lim_{A \rightarrow + \infty} \int_{\abs{f} \le A} \frac{f(\zeta)}{1 - \zeta \overline{z}}\,dm(\zeta)},
\]
for all $z\in\D$. Now, since $f\in H_0^{1, \infty}$, \eqref{eq:Alexandrov2} gives
\[
\lim_{A \rightarrow + \infty} \int_{\abs{f} \le A} \frac{f(\zeta)}{1 - \zeta \overline{z}} ~dm(\zeta) = f(0),
\]
for all $z\in\D$. Finally, we get that  $\overline{f(z)} = \overline{f(0)}$ for all $z \in \D$ and $f$ is a constant function.
\end{proof}

Using F. and M. Riesz Theorem, it is easy to check that for $f\in L^1(\T)$, then $Cf\equiv 0$ on $\D$ if and only if $f\in \overline{H_0^1}=\overline{zH^1}$. We end this section on Cauchy transform by two simple identities. 
\begin{lem}\label{Lem-Cauchy-Transform}\

\begin{enumerate}
\item[(i)] Let $f \in L^1(\T)$. Then $C(f) - f = \widehat{f}(0) - \overline{C(\overline{f})}$ a.e. on $\T$.
\item[(ii)] For $f,g \in H^2$, $\scal{f}{g k_\lambda}_2 = C(f\overline{g})(\lambda)$ for all $\lambda \in \D$. 
\end{enumerate}
\end{lem}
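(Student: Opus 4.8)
The plan is to derive both identities from the elementary algebraic relation linking the Cauchy kernel $1/(1-\overline\zeta w)$ to the Poisson kernel on $\T$, combined with the boundary information on Cauchy transforms recalled above. Part~(ii) is the easier one, so let me dispose of it first. For $f,g\in H^2$ the product $f\overline g$ lies in $L^1(\T)$ by the Cauchy--Schwarz inequality, so $C(f\overline g)$ is well defined. Since for $\zeta\in\T$ one has $\overline{k_\lambda(\zeta)}=\overline{(1-\overline\lambda\zeta)^{-1}}=(1-\lambda\overline\zeta)^{-1}=(1-\overline\zeta\lambda)^{-1}$, writing out the $L^2(\T)$ scalar product gives
\[
\scal{f}{gk_\lambda}_2=\int_\T f(\zeta)\,\overline{g(\zeta)}\,\overline{k_\lambda(\zeta)}\,dm(\zeta)=\int_\T\frac{f(\zeta)\overline{g(\zeta)}}{1-\overline\zeta\lambda}\,dm(\zeta)=C(f\overline g)(\lambda),
\]
for every $\lambda\in\D$, which is (ii).

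For (i) I would begin from the pointwise identity, valid for every $\zeta\in\T$ and every $w\in\D$,
\[
\frac{1}{1-\overline\zeta w}+\overline{\left(\frac{1}{1-\overline\zeta w}\right)}=1+\frac{1-|w|^2}{|1-\overline\zeta w|^2},
\]
obtained by reducing the left-hand side to the common denominator $|1-\overline\zeta w|^2=(1-\overline\zeta w)(1-\zeta\overline w)$ and using $|\zeta|=1$. Multiplying this identity by $f(\zeta)$, integrating $dm(\zeta)$, and noting that $\int_\T f(\zeta)\,\overline{(1-\overline\zeta w)^{-1}}\,dm(\zeta)=\overline{\int_\T\overline{f(\zeta)}\,(1-\overline\zeta w)^{-1}\,dm(\zeta)}=\overline{C(\overline f)(w)}$, we obtain, for every $w\in\D$,
\[
Cf(w)+\overline{C(\overline f)(w)}=\widehat f(0)+\int_\T f(\zeta)\,\frac{1-|w|^2}{|1-\overline\zeta w|^2}\,dm(\zeta),
\]
the last integral being the Poisson integral of $f$ at $w$.

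It then remains to let $w\to\zeta$ radially in this identity between holomorphic (resp.\ harmonic) functions on $\D$. By Fatou's theorem the Poisson integral of $f$ tends to $f(\zeta)$ for a.e.\ $\zeta\in\T$; and since $Cf$ and $C(\overline f)$ belong to $\bigcap_{0<p<1}H^p\subset\mathcal N^+$, they have radial limits a.e.\ on $\T$, namely the boundary functions denoted $Cf$ and $C(\overline f)$ in the statement. Passing to the limit gives $Cf(\zeta)+\overline{C(\overline f)(\zeta)}=\widehat f(0)+f(\zeta)$ for a.e.\ $\zeta\in\T$, that is, $C(f)-f=\widehat f(0)-\overline{C(\overline f)}$ a.e.\ on $\T$. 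There is no real obstacle here: part (ii) is a one-line computation, and the only delicate point in part (i) is precisely this passage to boundary values, which relies on the $H^p$-membership of Cauchy transforms (for the radial limits of $Cf$ and $C(\overline f)$) and on Fatou's theorem (for the Poisson integral of the $L^1$ function $f$); one should also be careful with the conjugation bookkeeping in the kernel identity so that it is $\overline{C(\overline f)}$, and not $C(\overline f)$, that appears.
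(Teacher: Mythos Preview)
Your proof is correct and follows essentially the same route as the paper: the same kernel identity (the paper writes it as $-1+\frac{\zeta}{\zeta-z}+\frac{\overline\zeta}{\overline\zeta-\overline z}=\frac{1-|z|^2}{|\zeta-z|^2}$, which is your identity after using $|\zeta|=1$), the same integration against $f\,dm$ to obtain $-\widehat f(0)+Cf(z)+\overline{C(\overline f)(z)}=Pf(z)$, and the same passage to radial limits via $Cf,C(\overline f)\in\mathcal N^+$ and Fatou's theorem; part~(ii) is also the same one-line computation.
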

\begin{proof}
$(i)$ Let $\zeta \in \T$ and $z \in \D$. Observe that  
\[
- 1 + \frac{\zeta}{\zeta - z} + \frac{\overline{\zeta}}{\overline{\zeta} - \overline{z}} = \frac{1 - \abs{z}^2}{\abs{\zeta - z}^2},
\] 
and then
\[ 
- \int_\T f(\zeta) ~dm(\zeta) + \int_\T f(\zeta) \frac{\zeta}{\zeta - z} ~dm(\zeta) + \int_\T f(\zeta)\frac{\overline{\zeta}}{\overline{\zeta} - \overline{z}} ~dm(\zeta) = (Pf)(z),
\]
where $Pf$ is the Poisson integral of $f$. In other words, we have 
\[
- \widehat{f}(0) + \int_\T \frac{f(\zeta)}{1 - \overline{\zeta}z} ~dm(\zeta) + \int_\T \frac{f(\zeta)}{1 - \overline{z}\zeta} ~dm(\zeta) = (Pf)(z)
\] 
and thus
\[
- \widehat{f}(0) + (Cf)(z) + \overline{C(\overline{f})(z)} = (Pf)(z),
\] 
for all $z\in\D$.  Since $C$ maps $L^1(\T)$ into $\mathcal N^+$, $Cf$ and $C(\bar f)$ have radial limits almost everywhere on $\T$. Moreover, $Pf$ tends radially to $f$ almost everywhere on $\T$. Thus, we get that
\[
- \widehat{f}(0) + Cf + \overline{C(\overline{f})} = f \qquad\mbox{a.e. on }\T,
\]
which proves $(i)$.

$(ii)$ Let $f,g \in H^2$ and $\lambda \in \D$. We have 
\begin{equation*}
\scal{f}{gk_\lambda}_2 = \int_\T f(\zeta) \frac{\overline{g(\zeta)}}{1 - \lambda \overline{\zeta}} ~ dm(\zeta) = C(f\overline{g})(\lambda). \qedhere 
\end{equation*}
\end{proof}

\section{A completeness problem related to cyclicity}\label{Firststeps}
We shall make an interesting connection between the cyclic vectors for $S_b$ and a completeness problem. This link will enable us to use Lemma~\ref{lemme-fkplus} to give some sufficient conditions for the cyclicity in the next section. We first start with the situation in the Hardy space to motivate our results on $\mathcal H(b)$. 

\subsection{A completeness problem in the Hardy space}

\begin{lem}\label{Linfnoy2-H2H2}
Let $f=\Theta f_e\in H^2$, where $\Theta$ is inner and $f_e$ is outer. Then, for every sequence $(\lambda_n)_{n\geq 1}\subset \D$ satisfying $\sum_{n=1}^\infty(1-|\lambda_n|)=\infty$, we have
\[
\Span_{H^2}(fk_{\lambda_n}:n\geq 1)=\Theta H^2.
\]
\end{lem}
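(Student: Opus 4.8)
The plan is to prove the two inclusions $\Span_{H^2}(fk_{\lambda_n}:n\geq 1)\subseteq \Theta H^2$ and $\Theta H^2\subseteq \Span_{H^2}(fk_{\lambda_n}:n\geq 1)$ separately. The first inclusion is immediate: each $fk_{\lambda_n}=\Theta f_e k_{\lambda_n}$ lies in $\Theta H^2$ (since $f_e k_{\lambda_n}\in H^2$ because $k_{\lambda_n}\in H^\infty$), and $\Theta H^2$ is a closed subspace of $H^2$, so it contains the closed span. The substance is the reverse inclusion, equivalently: if $h\in\Theta H^2$ is orthogonal to every $fk_{\lambda_n}$, then $h=0$.

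So suppose $h=\Theta h_1$ with $h_1\in H^2$ and $\scal{h}{fk_{\lambda_n}}_2=0$ for all $n\geq 1$. By Lemma~\ref{Lem-Cauchy-Transform}(ii), $\scal{h}{fk_{\lambda_n}}_2=C(h\overline{f})(\lambda_n)$, so the analytic function $\Phi:=C(h\overline{f})$ vanishes at every $\lambda_n$. Now $h\overline f=\Theta h_1\overline{\Theta f_e}=h_1\overline{f_e}\in L^1(\T)$, so $\Phi\in H^p$ for every $p<1$ by the properties of the Cauchy transform recalled before Theorem~\ref{Kolmogorov} (indeed $\Phi\in H_0^{1,\infty}\subset\mathcal N^+$). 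A nonzero function in the Smirnov class — in particular a nonzero function in some $H^p$, $p>0$ — whose zero set includes a sequence $(\lambda_n)$ with $\sum_n(1-|\lambda_n|)=\infty$ must be identically zero, since its zeros would otherwise form a Blaschke sequence. Hence $\Phi=C(h\overline f)\equiv 0$ on $\D$. By the F. and M. Riesz consequence recalled in the excerpt, $Cg\equiv 0$ on $\D$ iff $g\in\overline{zH^1}$, so $h\overline f=h_1\overline{f_e}\in\overline{zH^1}$, i.e. $h_1\overline{f_e}=\overline{z\psi}$ for some $\psi\in H^1$.

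It remains to deduce $h_1=0$ from $h_1\overline{f_e}=\overline{z\psi}$, $\psi\in H^1$. Rearranging, $zh_1\psi^{\ast}$-type manipulations: multiply both sides by $z$ to get $zh_1\overline{f_e}=\overline{\psi}$ a.e. on $\T$; thus $\overline{\psi}\in\overline{H^1}$ and also $\overline{\psi}=zh_1\overline{f_e}$. Taking conjugates, $\psi=\overline{z}\,\overline{h_1}f_e$ a.e. on $\T$. Here $f_e$ is outer, hence $f_e$ is a cyclic vector for the backward shift in an appropriate sense; more directly, the identity shows $\overline{z}\,\overline{h_1}f_e\in H^1$, i.e. $\overline{h_1}f_e\in zH^1\subset\overline{\overline{z}\,\overline{H^1}}$... the clean way is: from $\psi=\overline{z}\overline{h_1}f_e\in H^1$ we get $\overline{h_1}f_e=z\psi\in zH^1$, so $\overline{h_1}f_e\in H^1$ with the additional property that it vanishes at $0$. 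But $\overline{h_1}f_e\in H^1$ forces $\overline{h_1}\in H^1/f_e$, and combined with $h_1\in H^2\subset N^+$ one concludes, using that $f_e$ is outer, that $\overline{h_1}$ is in the Smirnov class; since also $h_1\in N^+$, a function with both $h_1$ and $\overline{h_1}$ in $N^+$ is constant, and the constant must be $0$ because $\overline{h_1}f_e$ vanishes at $0$ (or because it lies in $zH^1$). Therefore $h_1=0$, hence $h=0$, which completes the argument.

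The main obstacle is the last step: extracting $h_1=0$ from the membership $h_1\overline{f_e}\in\overline{zH^1}$. One must use the outerness of $f_e$ in an essential way — an inner factor of $f_e$ would survive and break the conclusion, which is exactly why the hypothesis isolates the inner part $\Theta$ into $f$. The cleanest route is probably to argue directly with Smirnov-class factorization: write $h_1\overline{f_e}=\overline{z\psi}$, take arguments (or boundary factorizations) on $\T$, and observe that $\arg(h_1)$ is determined modulo the argument of an outer function, forcing $h_1$ to be a scalar multiple of an outer function that is simultaneously anti-analytic, hence constant, hence zero. Alternatively, one can invoke Beurling's theorem: $\Span_{H^2}(fk_{\lambda_n})$ is an $S^\ast$-invariant-complement type object, but since $f=\Theta f_e$ and $f_e$ is outer, $\Span_{H^2}(f\cdot\{k_{\lambda_n}\})=\Theta\cdot\Span_{H^2}(f_e\cdot\{k_{\lambda_n}\})$ reduces the problem to the outer case $\Theta=1$, and there one shows directly that $\{f_e k_{\lambda_n}\}$ is complete in $H^2$ — this is the formulation I expect the authors to use, and it is the step deserving the most care.
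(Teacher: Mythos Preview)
Your approach is the same as the paper's: show $\Span_{H^2}(fk_{\lambda_n})\subset\Theta H^2$ trivially, then take $h=\Theta h_1\in\Theta H^2$ orthogonal to all $fk_{\lambda_n}$, cancel the inner factor to get $C(h_1\overline{f_e})(\lambda_n)=0$, deduce $C(h_1\overline{f_e})\equiv 0$ from the non-Blaschke hypothesis, and use outerness of $f_e$ to conclude $h_1=0$. The paper writes this last step as: $h_1\overline{f_e}=\overline{\psi}$ with $\psi\in H_0^1$, hence $\overline{h_1}=\psi/f_e\in L^2(\T)\cap\mathcal N^+=H^2$, so $h_1\in H^2\cap\overline{H_0^2}=\{0\}$.

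There is, however, a genuine (if easily repaired) error in your last paragraph. You assert that ``a function with both $h_1$ and $\overline{h_1}$ in $\mathcal N^+$ is constant''. This is false: take $h_1(z)=1/(1-z)\in\mathcal N^+$; on $\T$ one has $\overline{h_1(\zeta)}=\overline{1/(1-\zeta)}=-\zeta/(1-\zeta)$, which is again in $\mathcal N^+$, yet $h_1$ is not constant. What actually forces the conclusion is the $L^2$ control you already have: since $h_1\in H^2$, once you know $\overline{h_1}=z\psi/f_e\in\mathcal N^+$ you also know $|\overline{h_1}|=|h_1|\in L^2(\T)$, and Smirnov's theorem gives $\overline{h_1}\in L^2(\T)\cap\mathcal N^+=H^2$. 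Then $h_1\in H^2\cap\overline{H^2}$ is constant, and since $\overline{h_1}(0)=0\cdot\psi(0)/f_e(0)=0$ you get $h_1=0$. Replace your $\mathcal N^+$ claim by this two-line argument and the proof is complete (and identical to the paper's). The surrounding discussion in that paragraph --- the quotient notation $H^1/f_e$, the argument-of-$h_1$ heuristic, the Beurling alternative --- is unnecessary and should be cut.
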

\begin{proof}
Since $fk_{\lambda_n}\in\Theta H^2$ and $\Theta H^2$ is a closed subspace of $H^2$, the inclusion $\Span_{H^2}(fk_{\lambda_n}:n\geq 1)\subset\Theta H^2$ is clear. Assume now, on the contrary, that $\Span_{H^2}(fk_{\lambda_n}:n\geq 1)\subsetneq\Theta H^2$. Then there exists $\varphi \in H^2$, $\varphi\neq 0$, such that $\Theta \varphi$ is orthogonal to   $fk_{\lambda_n}$ in $H^2$ for every $n \ge 1$. According to Lemma~\ref{Lem-Cauchy-Transform} $(ii)$, we have
\[
0=\scal{\Theta \varphi}{f k_{\lambda_n}}_2=\scal{\varphi}{f_e k_{\lambda_n}}_2=C(\varphi \overline{f_e})(\lambda_n).
\] 
But, since $C(\varphi\overline{f_e})\in \mathcal N^+$ and $\sum_{n=1}^{\infty} (1 - \abs{\lambda_n}) =\infty$, we deduce that $C(\varphi \overline{f_e})\equiv 0$ on $\D$. In particular, there exists $\psi\in H_0^1$ such that $\varphi\overline{f_e}=\overline{\psi}$, which gives that $\varphi=\frac{\overline{\psi}}{\overline{f_e}}$ almost everywhere on $\T$. In particular, we have $\overline{\varphi}\in L^2(\T)\cap\mathcal N^+=H^2$, and then $\varphi\in H^2\cap \overline{H_0^2}=\{0\}$, which gives the desired contradiction.
\end{proof}

\begin{coro}\label{cor:completeness-Hardy}
Let $(\lambda_n)_{n\geq 1}\subset \D$ satisfying $\sum_{n=1}^\infty(1-|\lambda_n|)=\infty$, and let $f\in H^2$. Then 
\[
\Span_{H^2}(fk_{\lambda_n}:n\geq 1)=H^2\Longleftrightarrow\mbox{ $f$ is outer.}
\]
\end{coro}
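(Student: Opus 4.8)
The plan is to derive Corollary~\ref{cor:completeness-Hardy} directly from Lemma~\ref{Linfnoy2-H2H2} by writing the inner-outer factorization $f=\Theta f_e$ and observing that Lemma~\ref{Linfnoy2-H2H2} identifies $\Span_{H^2}(fk_{\lambda_n}:n\geq 1)$ with $\Theta H^2$ under the standing hypothesis $\sum_{n\geq 1}(1-|\lambda_n|)=\infty$. Then $\Span_{H^2}(fk_{\lambda_n}:n\geq 1)=H^2$ is equivalent to $\Theta H^2=H^2$, and the latter holds if and only if $\Theta$ is a unimodular constant, i.e. if and only if $f$ is outer.

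Concretely, I would proceed as follows. First, if $f\equiv 0$ then $\Span_{H^2}(fk_{\lambda_n}:n\geq 1)=\{0\}\neq H^2$ and $f$ is not outer, so the equivalence holds trivially; assume henceforth $f\not\equiv 0$. Factor $f=\Theta f_e$ with $\Theta$ inner and $f_e$ outer (both nonzero), which is possible for any nonzero $H^2$ function. By Lemma~\ref{Linfnoy2-H2H2}, $\Span_{H^2}(fk_{\lambda_n}:n\geq 1)=\Theta H^2$. For the forward implication, if this span equals $H^2$, then $\Theta H^2=H^2$; since $1\in H^2=\Theta H^2$, there is $h\in H^2$ with $\Theta h=1$, so $h=1/\Theta=\overline{\Theta}$ a.e.\ on $\T$, hence $\overline{\Theta}\in H^2$, which together with $\Theta\in H^2$ forces $\Theta$ to be constant (its Fourier coefficients of negative and positive index all vanish except at $0$). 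Thus $\Theta$ is a unimodular constant and $f=\Theta f_e$ is outer. Conversely, if $f$ is outer then $\Theta$ is a unimodular constant, $\Theta H^2=H^2$, and Lemma~\ref{Linfnoy2-H2H2} gives $\Span_{H^2}(fk_{\lambda_n}:n\geq 1)=H^2$.

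There is essentially no obstacle here: the corollary is a packaging of the lemma together with the elementary facts that (i) every nonzero $H^2$ function has an inner-outer factorization, and (ii) $\Theta H^2=H^2$ for an inner function $\Theta$ exactly when $\Theta$ is a unimodular constant (equivalently, $f$ outer means the inner factor is trivial). The only point requiring a word of care is the degenerate case $f\equiv 0$, which should be dispatched first. If one prefers to avoid it, one can instead invoke the characterization $\Span_{H^2}(\Theta H^2)=\Theta H^2$ being all of $H^2$ iff $\Theta$ is constant and note that $f$ outer is by definition the statement that the inner factor is constant.
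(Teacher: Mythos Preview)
Your proof is correct and takes essentially the same approach as the paper, which simply states that the corollary follows immediately from Lemma~\ref{Linfnoy2-H2H2}. You have just spelled out the details (inner--outer factorization, $\Theta H^2=H^2$ iff $\Theta$ is a unimodular constant, and the trivial case $f\equiv 0$) that the paper leaves implicit.
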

\begin{proof}
This follows immediately from Lemma~\ref{Linfnoy2-H2H2}.
\end{proof}
\begin{rem}
It should be noted that if the sequence $(\lambda_n)_{n\geq 1}$ is a Blaschke sequence, that is $\sum_{n=1}^\infty (1-|\lambda_n|)<\infty$, then it may happen that the sequence $(fk_{\lambda_n})_{n\geq 1}$ is no longer complete, even if $f$ is an outer function. Indeed, it is sufficient to observe that if $B$ is the  Blaschke product, associated to the Blaschke sequence $(\lambda_n)_{n\geq 1}$, then
\[
\Span_{H^2}(k_{\lambda_n}:n\geq 1)=H^2\ominus BH^2\neq H^2.
\]
\end{rem}
\subsection{A completeness problem in \texorpdfstring{$\mathcal H(b)$}{}} 

The cyclicity of $S_b$ involves the closed linear span generated by $z^nf$, $n\geq 1$, where $f\in\mathcal H(b)$. We shall make a connection with the closed linear span generated by $k_{\lambda}f$, $|\lambda|<\|S_b\|^{-1}$. Recall that $\|S_b\|>1$, see \eqref{norm-Sb}. The following lemma will be the key  to make this connection. It is inspired by \cite[Theorem 5.5]{DBR1}.
\begin{lem}\label{egSpanMult}
We have
\begin{equation}\label{eq:Span-Mult2434Z}
\Span_{\mathfrak{M}(\mathcal{H}(b))}(z^n : n \ge 0)= \Span_{\mathfrak{M}(\mathcal{H}(b))}\left(k_\mu : \abs{\mu} < \frac{1}{\norm{S_b}}\right).
\end{equation}
\end{lem}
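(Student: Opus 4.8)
The plan is to prove the two inclusions separately, exploiting that the multiplier algebra $\mathfrak{M}(\mathcal H(b))$ is a Banach algebra in which polynomials and the Cauchy kernels $k_\mu$ (for $\mu\in\D$) both live, and in which multiplication is continuous. First I would observe that for $|\mu|<1/\|S_b\|$ we have the Neumann-series expansion $k_\mu=\sum_{n\ge 0}\bar\mu^{\,n}z^n$, and the key point is that this series converges in the norm $\|\cdot\|_{\mathfrak{M}(\mathcal H(b))}$: indeed $\|z^n\|_{\mathfrak{M}(\mathcal H(b))}=\|S_b^n\|\le\|S_b\|^n$, so $\sum_n|\mu|^n\|z^n\|_{\mathfrak{M}(\mathcal H(b))}\le\sum_n(|\mu|\,\|S_b\|)^n<\infty$ precisely when $|\mu|\,\|S_b\|<1$. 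Hence each such $k_\mu$ lies in $\Span_{\mathfrak{M}(\mathcal H(b))}(z^n:n\ge0)$, and this immediately gives the inclusion $\supseteq$ in \eqref{eq:Span-Mult2434Z}.

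For the reverse inclusion $\subseteq$, I need to approximate each monomial $z^n$ in multiplier norm by finite linear combinations of the $k_\mu$'s with $|\mu|<1/\|S_b\|$. I would first show $1\in\Span_{\mathfrak{M}(\mathcal H(b))}(k_\mu:|\mu|<1/\|S_b\|)$: taking $\mu\to 0$ (or just $\mu=0$) gives $k_0=1$ directly, so the constants are there for free. Next, for $z$ itself, use a finite-difference trick: for small real $t\ne 0$ with $|t|<1/\|S_b\|$,
\[
\frac{k_t-k_0}{t}=\sum_{n\ge 1}t^{\,n-1}z^n=z+t\sum_{n\ge 2}t^{\,n-2}z^n,
\]
and the tail $\sum_{n\ge 2}t^{n-2}z^n$ has multiplier norm bounded by $\sum_{n\ge2}|t|^{n-2}\|S_b\|^n=\|S_b\|^2/(1-|t|\,\|S_b\|)$, uniformly for $|t|\le \tfrac{1}{2}\|S_b\|^{-1}$ say; hence $(k_t-k_0)/t\to z$ in $\mathfrak{M}(\mathcal H(b))$ as $t\to 0$, so $z$ belongs to the right-hand span. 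The same differencing argument, iterated (or: differentiating the map $\mu\mapsto k_{\bar\mu}$ a further $n-1$ times, realized as a norm limit of divided differences), produces each $z^n$: the $n$-th divided difference of $k_\mu$ at $\mu=0$ equals $z^n+O(t)$ in multiplier norm, with the error controlled exactly as above by the geometric estimate $\|z^m\|_{\mathfrak M}\le\|S_b\|^m$. Alternatively, and perhaps cleaner to write, one can argue by induction: having shown $1,z,\dots,z^{n-1}$ lie in the right-hand span, note $t^{-n}\bigl(k_t-\sum_{j=0}^{n-1}t^jz^j\bigr)=z^n+t\,(\text{tail})$ with the tail of multiplier norm $O(1)$ as $t\to0$, so $z^n$ is a norm limit of elements of the span.

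The main obstacle is purely the quantitative one: making sure every manipulation of the series $\sum\bar\mu^n z^n$ — its convergence, the rearrangements, and the divided-difference tails — is justified in the \emph{multiplier} norm rather than merely in $\mathcal H(b)$ or pointwise, and this is exactly where the strict inequality $|\mu|<1/\|S_b\|$ and the submultiplicativity $\|z^{m+k}\|_{\mathfrak M}\le\|z^m\|_{\mathfrak M}\|z^k\|_{\mathfrak M}$ together with $\|z\|_{\mathfrak M}=\|S_b\|$ are used. Once the geometric bound $\|z^n\|_{\mathfrak{M}(\mathcal H(b))}\le\|S_b\|^n$ is in hand, every step above is a routine estimate, and the two inclusions close the proof. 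I would remark at the end that the restriction $|\mu|<\|S_b\|^{-1}$ is essential for the $\supseteq$ direction since that is the radius of convergence of the Neumann series in $\mathfrak{M}(\mathcal H(b))$, mirroring the corresponding phenomenon in \cite[Theorem 5.5]{DBR1}.
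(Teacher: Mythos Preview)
Your proof is correct. The $\supseteq$ direction is identical to the paper's: both use the Neumann series $k_\mu=\sum_{n\ge0}\bar\mu^n z^n$ together with $\|z^n\|_{\mathfrak M(\mathcal H(b))}=\|S_b^n\|\le\|S_b\|^n$ to get absolute convergence in the multiplier norm when $|\mu|\,\|S_b\|<1$.

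For the $\subseteq$ direction the paper takes a different route. Rather than your inductive divided-difference scheme, it produces each $z^n$ in one shot by a roots-of-unity averaging: for $n\ge1$, with $\zeta=e^{2i\pi/n}$ and $0<r<\|S_b\|^{-1}$, the combination
\[
g_r=\frac{k_r+k_{r\zeta}+\cdots+k_{r\zeta^{n-1}}-nk_0}{n r^n}
\]
kills every power of $z$ not divisible by $n$, leaving $g_r=\sum_{\ell\ge1}r^{(\ell-1)n}z^{\ell n}$, and then $\|g_r-z^n\|_{\mathfrak M}\to0$ as $r\to0$. Your approach trades this algebraic trick for an induction, subtracting off the already-obtained monomials $1,z,\dots,z^{n-1}$ from $k_t$ and rescaling. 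Both arguments rest on exactly the same geometric estimate for the tail, so neither is deeper; the paper's version avoids induction and gives an explicit finite combination of kernels for each $n$, while yours is perhaps the more natural thing to write down if one thinks of $z^n$ as $\frac{1}{n!}\partial_{\bar\mu}^n k_\mu\big|_{\mu=0}$.
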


\begin{proof}
We shall prove the two inclusions separately. First, let $\mu\in\C$ such that $|\mu|<{\norm{S_b}}^{-1}$. Let us prove that $k_\mu\in \Span_{\mathfrak{M}(\mathcal{H}(b))}(z^n : n \ge 0)$. Observe that 
\begin{align*}
\norm{k_\mu - \sum_{n=0}^N \overline{\mu}^n z^n}_{\mathfrak{M}(\mathcal{H}(b))} & \le \sum_{n \ge N+1} \abs{\mu}^n \norm{z^n}_{\mathfrak{M}(\mathcal{H}(b))} = \sum_{n \ge N+1} \abs{\mu}^n \norm{S_b^n}\\ & \le \sum_{n \ge N+1} \abs{\mu}^n \norm{S_b}^n\underset{N \rightarrow \infty}{\longrightarrow} 0.
        \end{align*}
Thus we get that $k_\mu \in \Span_{\mathfrak{M}(\mathcal{H}(b))}(z^n : n \ge 0)$, which gives the first inclusion.

For the reversed inclusion, let $n\in\mathbb N$ and let us prove that $z^n$ belongs to  $\Span_{\mathfrak{M}(\mathcal{H}(b))}\left(k_\mu : \abs{\mu}<{\norm{S_b}}^{-1}\right)$. 
 For $n=0$, this is trivial because $z^0=k_0=1$. Let us assume now that $n\geq 1$, and introduce the $n$th root of unity $\zeta=e^{2i\pi/n}$. For $0<r<\|S_b\|^{-1}$, define 
\[
g_r=\frac{k_r + k_{r\zeta} + \cdots + k_{r\zeta^{n-1}} - nk_0}{n r^n}
\]
which belongs to $\bigvee(k_\mu:|\mu|<\|S_b\|^{-1})$. We shall prove that $g_r \underset{r \rightarrow 0}{\longrightarrow} z^n$ in $\mathfrak{M}(\mathcal{H}(b))$. Indeed, for all $z\in\D$, we have 
    \begin{align*}
g_r(z) & = \frac{1}{n r^n} \left (\sum_{k=0}^\infty r^k z^k + \sum_{k=0}^\infty r^k \zeta^k z^k + \cdots + \sum_{k=0}^\infty r^k \zeta^{(n-1)k}z^k - n \right) \\ & = \frac{1}{n r^n} \sum_{k=1}^\infty r^k z^k \left (\sum_{i = 0}^{n-1} \zeta^{ik} \right).
    \end{align*}
Note that, for $k\in\mathbb N^*$, we have
\[
\sum_{i=0}^{n-1}\zeta^{ik}=\begin{cases}
n,&\mbox{if }k=n\ell\mbox{ for some }\ell\in\mathbb N^*, \\
\frac{1-\zeta^{nk}}{1-\zeta^k}=0,& \mbox{if }k\notin n\mathbb N.
\end{cases}
\]
Thus we get that     
\[
g_r(z) = \frac{1}{r^n} \sum_{\ell=1}^\infty r^{\ell n}z^{\ell n} = \sum_{\ell=1}^\infty r^{(\ell-1)n}z^{\ell n}.
\] 
This implies
\begin{align*}
\norm{g_r- z^n}_{\mathfrak{M}(\mathcal{H}(b))} & \le \sum_{\ell=2}^\infty r^{(\ell-1)n} \norm{S_b}^{\ell n}= \sum_{\ell=1}^\infty r^{\ell n}\norm{S_b}^{(\ell+1)n}\\ & 
=\frac{r^n \|S_b\|^{2n}}{1-r^n\|S_b\|^n}, 
\end{align*}
 and the last term tends to $0$ as $r$ goes to $0$. Hence  $z^n$ belongs to $\Span_{\mathfrak{M}(\mathcal{H}(b))}\left(k_\mu : \abs{\mu}<{\norm{S_b}}^{-1}\right)$, and we get the reversed inclusion, and then the equality \eqref{eq:Span-Mult2434Z}.
 \end{proof}

\begin{coro}\label{nvlExpCyclShift}
Let $f \in \mathcal{H}(b)$. Then 
\[
\Span_{\mathcal H(b)}(z^n f : n \ge 0) = \Span_{\mathcal H(b)}\left (k_\mu f : \abs{\mu} < \frac{1}{\norm{S_b}} \right).
\]
In particular, $f$ is cyclic in $\mathcal H(b)$ if and only if 
\[
 \Span_{\mathcal H(b)}\left (k_\mu f : \abs{\mu} < \frac{1}{\norm{S_b}} \right)=\mathcal H(b).
\]
\end{coro}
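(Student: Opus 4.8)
The plan is to leverage Lemma~\ref{egSpanMult} together with the fact that polynomials — and hence the closed span of $(z^n:n\ge 0)$ in the multiplier algebra — act boundedly on $\mathcal H(b)$. The guiding principle is that if two closed subspaces $E$ and $F$ of $\mathfrak{M}(\mathcal H(b))$ coincide, and each multiplier sends $f$ into $\mathcal H(b)$ boundedly in a way compatible with taking closures, then the closed spans of $Ef$ and $Ff$ in $\mathcal H(b)$ should coincide as well. So the first step is to record that for any multiplier $\varphi\in\mathfrak{M}(\mathcal H(b))$, the map $\varphi\mapsto \varphi f$ is continuous from $\mathfrak{M}(\mathcal H(b))$ to $\mathcal H(b)$: indeed $\|\varphi f\|_b=\|M_\varphi f\|_b\le \|M_\varphi\|_{\mathcal L(\mathcal H(b))}\|f\|_b=\|\varphi\|_{\mathfrak{M}(\mathcal H(b))}\|f\|_b$.

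Next I would prove the inclusion ``$\subseteq$''. A generic element of $\bigvee(z^n f:n\ge 0)$ is $pf$ for a polynomial $p$. By Lemma~\ref{egSpanMult}, $p\in\Span_{\mathfrak{M}(\mathcal H(b))}(z^n:n\ge 0)=\Span_{\mathfrak{M}(\mathcal H(b))}(k_\mu:|\mu|<\|S_b\|^{-1})$, so there is a sequence $(q_j)_j$ of finite linear combinations of the $k_\mu$'s ($|\mu|<\|S_b\|^{-1}$) with $\|p-q_j\|_{\mathfrak{M}(\mathcal H(b))}\to 0$. By the continuity estimate above, $\|pf-q_jf\|_b\to 0$; since each $q_jf\in\bigvee(k_\mu f:|\mu|<\|S_b\|^{-1})$, we get $pf\in\Span_{\mathcal H(b)}(k_\mu f:|\mu|<\|S_b\|^{-1})$. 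Taking closures gives $\Span_{\mathcal H(b)}(z^nf:n\ge 0)\subseteq\Span_{\mathcal H(b)}(k_\mu f:|\mu|<\|S_b\|^{-1})$. The reverse inclusion is entirely symmetric: a generic $k_\mu f$ with $|\mu|<\|S_b\|^{-1}$ is approximated in $\mathcal H(b)$ by $q_jf$ where $q_j$ are polynomials converging to $k_\mu$ in $\mathfrak{M}(\mathcal H(b))$ (again by Lemma~\ref{egSpanMult}), and each $q_jf\in\bigvee(z^nf:n\ge 0)$. This yields the asserted equality of the two closed spans.

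The ``in particular'' clause is then immediate: $f$ is cyclic for $S_b$ means, by definition, $\Span_{\mathcal H(b)}(z^nf:n\ge 0)=\mathcal H(b)$, which by the equality just established is equivalent to $\Span_{\mathcal H(b)}(k_\mu f:|\mu|<\|S_b\|^{-1})=\mathcal H(b)$.

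I do not expect a serious obstacle here; the only point requiring a little care is the bookkeeping that finite linear combinations of $k_\mu$'s, multiplied by $f$, really do land in $\bigvee(k_\mu f:|\mu|<\|S_b\|^{-1})$ (linearity of $M_f$) and that the multiplier-norm convergence transfers to $\mathcal H(b)$-norm convergence of the products — both of which follow from the boundedness of $M_\varphi$ and the contractive-style estimate recalled above. Everything else is a routine density/closure argument, so the real content of the statement is already carried by Lemma~\ref{egSpanMult}.
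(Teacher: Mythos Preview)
Your argument is correct and is exactly the approach the paper has in mind: the paper's own proof is the single line ``This follows directly from Lemma~\ref{egSpanMult},'' and what you wrote is precisely the routine unpacking of that sentence via the continuity of $\varphi\mapsto\varphi f$ from $\mathfrak{M}(\mathcal H(b))$ to $\mathcal H(b)$.
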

\begin{proof}
This follows directly from Lemma~\ref{egSpanMult}.
\end{proof}

\begin{lem}\label{SpanLatSb}
Let $(\lambda_n)_{n \ge 1} \subset \D \backslash \{0\}$ satisfying $\sum_{n=1}^\infty (1 - \abs{\lambda_n})=\infty$, and let $f \in \mathcal{H}(b)$. Then $\Span_{\mathcal H(b)}(fk_{\lambda_n} : n \ge 1)$ is invariant with respect to $S_b$.
\end{lem}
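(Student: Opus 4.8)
The plan is to reduce the statement to the single assertion that $f$ itself belongs to $\mathcal S:=\Span_{\mathcal H(b)}(fk_{\lambda_n}:n\geq 1)$, and then to prove this membership by a duality argument based on the explicit formula of Lemma~\ref{lemme-fkplus}. For the reduction, I would use the elementary identity $zk_\lambda=\frac{1}{\overline\lambda}(k_\lambda-1)$, valid for $\lambda\in\D\setminus\{0\}$ (it comes from writing $\overline\lambda z=-(1-\overline\lambda z)+1$). Multiplying by $f$ and recalling that $S_b$ is multiplication by $z$, this yields
\[
S_b(fk_{\lambda_n})=zfk_{\lambda_n}=\frac{1}{\overline{\lambda_n}}\big(fk_{\lambda_n}-f\big),\qquad n\geq 1,
\]
where we use that $\lambda_n\neq 0$ and that $fk_{\lambda_n}\in\mathcal H(b)$ by Lemma~\ref{lemme-fkplus}. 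Hence, once we know $f\in\mathcal S$, we get $S_b(fk_{\lambda_n})\in\mathcal S$ for every $n$, and since $\bigvee(fk_{\lambda_n}:n\geq 1)$ is dense in $\mathcal S$ and $S_b$ is bounded, it follows that $S_b\mathcal S\subseteq\mathcal S$. So the whole proof comes down to showing $f\in\mathcal S$.

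\textbf{A formula for $\langle fk_\lambda,h\rangle_b$.} To prove $f\in\mathcal S$, I would show $\mathcal S^\perp\subseteq\{f\}^\perp$. Let $h\in\mathcal H(b)$ with $\langle fk_{\lambda_n},h\rangle_b=0$ for all $n$. By linearity of the map $u\mapsto u^+$ (clear from \eqref{eq:carac-membership-Hb}) together with \eqref{caraHbNExt}, polarization gives $\langle u,v\rangle_b=\langle u,v\rangle_2+\langle u^+,v^+\rangle_2$ for $u,v\in\mathcal H(b)$. Applying this with $u=fk_\lambda$ and $v=h$, and substituting the formula of Lemma~\ref{lemme-fkplus} in the form $(fk_\lambda)^+=\bigl(f^++\tfrac{\overline{\lambda g(\lambda)}}{\overline{a(\lambda)}}\bigr)k_\lambda$, where $g$ is given by \eqref{eq223:carac-membership-Hb}, then using Lemma~\ref{Lem-Cauchy-Transform}(ii) and the reproducing property of $k_\lambda$ in $H^2$, a short computation yields, for all $\lambda\in\D$,
\[
\langle fk_\lambda,h\rangle_b=\overline{\Psi(\lambda)},\qquad \Psi(\lambda):=C(h\overline f)(\lambda)+C\bigl(h^+\overline{f^+}\bigr)(\lambda)+\frac{\lambda g(\lambda)}{a(\lambda)}\,h^+(\lambda),
\]
which is holomorphic on $\D$ since $a$, being outer, is zero-free there.

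\textbf{Conclusion via the Smirnov class.} Since $C$ maps $L^1(\T)$ into $\mathcal N^+$, since $g/a\in\mathcal N^+$ (quotient of an $H^2$ function by the outer function $a$), since $h^+\in H^2\subset\mathcal N^+$, and since $\mathcal N^+$ is an algebra, we get $\Psi\in\mathcal N^+$. By hypothesis $\Psi(\lambda_n)=0$ for all $n$, and $(\lambda_n)_{n\geq 1}$ is not a Blaschke sequence because $\sum_n(1-|\lambda_n|)=\infty$; as a nonzero Smirnov function has a Blaschke zero set, we conclude $\Psi\equiv 0$ on $\D$. Evaluating at $\lambda=0$ and using $k_0\equiv 1$ gives $\langle f,h\rangle_b=\langle fk_0,h\rangle_b=0$. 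Thus $\mathcal S^\perp\subseteq\{f\}^\perp$, i.e. $f\in\mathcal S$, and combining with the first paragraph we obtain $S_b\mathcal S\subseteq\mathcal S$.

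\textbf{Expected obstacle.} The only non-routine ingredient is obtaining the tractable closed form for $\langle fk_\lambda,h\rangle_b$; this is exactly where Lemma~\ref{lemme-fkplus} is essential, since a direct attack via the Toeplitz equation defining $(fk_\lambda)^+$ would be intractable (as remarked in the introduction for $(pf)^+$). Once that formula is available, the identification of the Smirnov class of $\Psi$ and the non-Blaschke vanishing argument are entirely standard and run parallel to the proof of Lemma~\ref{Linfnoy2-H2H2}.
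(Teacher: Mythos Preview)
Your proof is correct and follows essentially the same route as the paper: reduce to $f\in\mathcal S$ via the identity $S_b(fk_{\lambda_n})=\tfrac{1}{\overline{\lambda_n}}(fk_{\lambda_n}-f)$, then use Lemma~\ref{lemme-fkplus} and Lemma~\ref{Lem-Cauchy-Transform}(ii) to express $\langle h,fk_\lambda\rangle_b$ through an $\mathcal N^+$ function vanishing on a non-Blaschke sequence, forcing it to be identically zero and giving $\langle h,f\rangle_b=0$. The only cosmetic difference is that the paper multiplies through by $a$ to land in $\mathcal N^+$, whereas you invoke $1/a\in\mathcal N^+$ (since $a$ is outer) to argue that $\Psi$ itself lies in $\mathcal N^+$; both are valid.
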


\begin{proof}
Observe that, since $\lambda_n\neq 0$, we have
\begin{align*}
            S_b(fk_{\lambda_n})&=zf \frac{1}{1 - \overline{\lambda_n}z}= -\frac{1}{\overline{\lambda_n}} f \left (\frac{1 - \overline{\lambda_n}z - 1}{1 - \overline{\lambda_n}z} \right) \\ 
& = -\frac{1}{\overline{\lambda_n}}(f-fk_{\lambda_n}).
\end{align*}    
Thus it is sufficient to prove that $f\in\Span_{\mathcal H(b)}(fk_{\lambda_n} : n \ge 1)$. Assume on the contrary that $f\notin\Span_{\mathcal H(b)}(fk_{\lambda_n} : n \ge 1)$. Then there exists $h \in \mathcal{H}(b)$ such that $\scal{h}{fk_{\lambda_n}}_b=0$ for all $n \ge 1$, and $\scal{h}{f}_b \neq 0$. According to \eqref{caraHbNExt} and \eqref{decompfnoy+}, we get
\begin{align*}
        0 & = \scal{h}{fk_{\lambda_n}}_2 + \scal{h^+}{(fk_{\lambda_n})^+}_2 \\ & = \scal{h}{fk_{\lambda_n}}_2 + \scal{h^+}{f^+k_{\lambda_n}}_2 + \frac{\lambda_n g(\lambda_n)}{a(\lambda_n)}h^+(\lambda_n),
\end{align*}       
where $g$ satisfies \eqref{eq223:carac-membership-Hb}. Now it follows from Lemma~\ref{Lem-Cauchy-Transform} $(ii)$ that 
\[
0= C(h\overline{f})(\lambda_n) + C(h^+ \overline{f^+})(\lambda_n) + \frac{\lambda_n g(\lambda_n)}{a(\lambda_n)}h^+(\lambda_n). 
\]
Multiplying by $a(\lambda_n)$ gives 
\[
0 = a(\lambda_n)C(h\overline{f})(\lambda_n) + a(\lambda_n)C(h^+ \overline{f^+})(\lambda_n) + \lambda_n g(\lambda_n)h^+(\lambda_n).
\]
Observe that the function $aC(h\overline{f}) + aC(h^+\overline{f^+}) + z g h^+$ belongs to $\mathcal N^+$ and vanishes on the sequence $(\lambda_n)_{n\geq 1}$. Since $\sum_{n=1}^\infty (1 - \abs{\lambda_n})=\infty$, we deduce that $aC(h\overline{f}) + aC(h^+\overline{f}^+) + z g h^+ \equiv 0$ on $\D$. In particular, 
\[
 a(0)C(h \overline{f})(0) + a(0)C(h^+\overline{f^+})(0) = 0.
 \] 
Using  $a(0) \neq 0$ and one more time Lemma~\ref{Lem-Cauchy-Transform} $(ii)$  and \eqref{caraHbNExt}, we deduce that
\[
0 = C(h \overline{f})(0) + C(h^+\overline{f^+})(0) = \scal{h}{f}_2 + \scal{h^+}{f^+}_2 = \scal{h}{f}_b.
\]
Thus we get a contradiction, and we can conclude that  $f$ belongs to $\Span_{\mathcal H(b)}(fk_{\lambda_n} : n \ge 1)$.
 \end{proof}
 
 We may now link the problem of cyclicity with our problem of completeness. 
\begin{coro}\label{CaraCyclSbSpanHb}
Let $f \in \mathcal{H}(b)$. The following assertions are equivalent:
\begin{enumerate}
\item[(i)] $f$ is cyclic for $S_b$; 
\item[(ii)] for every sequence $(\lambda_n)_{n \ge 1} \subset \D$ satisfying $\sum_{n=1}^\infty (1 - \abs{\lambda_n}) = \infty$, we have $\Span_{\mathcal H(b)}(fk_{\lambda_n} : n \ge 1) = \mathcal{H}(b)$.
\end{enumerate}
\end{coro}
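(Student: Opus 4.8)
The implication (ii) $\Rightarrow$ (i) is essentially immediate. Given (ii), pick any sequence $(\lambda_n)_{n\ge 1}$ with $\sum_n(1-|\lambda_n|)=\infty$, all lying in the disk $\{|\mu|<\|S_b\|^{-1}\}$; for instance one can take infinitely many repetitions of a finite net of points in that disk, or a sequence accumulating on its boundary circle. Such a sequence exists precisely because $\|S_b\|^{-1}<1$ so the disk $\{|\mu|<\|S_b\|^{-1}\}$ is a genuine disk in $\D$, and $\sum_n(1-|\lambda_n|)=\infty$ is easy to arrange (e.g. $\lambda_n\to$ a boundary point of that subdisk, or simply $\lambda_n$ constant equal to some nonzero $\mu_0$ with $|\mu_0|<\|S_b\|^{-1}$, repeated; the divergence of $\sum(1-|\lambda_n|)$ then holds trivially). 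By hypothesis (ii), $\Span_{\mathcal H(b)}(fk_{\lambda_n}:n\ge 1)=\mathcal H(b)$; since each $fk_{\lambda_n}$ lies in $\Span_{\mathcal H(b)}(k_\mu f:|\mu|<\|S_b\|^{-1})$, we get $\Span_{\mathcal H(b)}(k_\mu f:|\mu|<\|S_b\|^{-1})=\mathcal H(b)$, and Corollary~\ref{nvlExpCyclShift} then gives that $f$ is cyclic for $S_b$.

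For the implication (i) $\Rightarrow$ (ii), fix a sequence $(\lambda_n)_{n\ge 1}\subset\D$ with $\sum_n(1-|\lambda_n|)=\infty$ and set $M=\Span_{\mathcal H(b)}(fk_{\lambda_n}:n\ge 1)$. The first point is that one may harmlessly assume $\lambda_n\ne 0$ for all $n$: discarding finitely many (or even all) zero terms does not affect the divergence of $\sum_n(1-|\lambda_n|)$, and if the resulting sequence is empty the divergence condition fails, so infinitely many $\lambda_n$ are nonzero; replacing the original sequence by this subsequence only shrinks $M$, so it suffices to prove completeness for the subsequence. With this reduction, Lemma~\ref{SpanLatSb} applies and tells us that $M$ is invariant under $S_b$. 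Since $f\in M$ (again by Lemma~\ref{SpanLatSb}, which shows $f\in\Span_{\mathcal H(b)}(fk_{\lambda_n}:n\ge 1)$), the space $M$ contains $\Span_{\mathcal H(b)}(S_b^n f:n\ge 0)$. But $f$ is cyclic for $S_b$, so $\Span_{\mathcal H(b)}(S_b^n f:n\ge 0)=\mathcal H(b)$, whence $M=\mathcal H(b)$, which is exactly (ii).

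I do not expect any real obstacle here; both directions are short corollaries of the machinery already assembled (Corollary~\ref{nvlExpCyclShift}, which via Lemma~\ref{egSpanMult} identifies cyclicity with density of $\{k_\mu f:|\mu|<\|S_b\|^{-1}\}$, and Lemma~\ref{SpanLatSb}, which provides both the $S_b$-invariance of $\Span_{\mathcal H(b)}(fk_{\lambda_n}:n\ge 1)$ and the membership $f\in\Span_{\mathcal H(b)}(fk_{\lambda_n}:n\ge 1)$). The only points requiring a word of care are the reduction to nonzero $\lambda_n$ in the (i) $\Rightarrow$ (ii) direction, and, in the (ii) $\Rightarrow$ (i) direction, the (trivial) verification that a sequence satisfying both $|\lambda_n|<\|S_b\|^{-1}$ and $\sum_n(1-|\lambda_n|)=\infty$ actually exists, which uses only $\|S_b\|^{-1}<1$ from \eqref{norm-Sb}.
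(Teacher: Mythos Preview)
Your proof is correct and follows exactly the same route as the paper: Lemma~\ref{SpanLatSb} (invariance of the span under $S_b$, together with $f$ belonging to that span) for (i)~$\Rightarrow$~(ii), and Corollary~\ref{nvlExpCyclShift} for (ii)~$\Rightarrow$~(i). One small slip worth fixing: your justification for the reduction to nonzero $\lambda_n$ (``if the resulting sequence is empty the divergence condition fails'') is not correct, since the constant sequence $\lambda_n\equiv 0$ satisfies $\sum_n(1-|\lambda_n|)=\infty$; the paper makes the same ``without loss of generality'' reduction without further comment.
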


\begin{proof}
$(i)\implies (ii)$: Assume that $f\in\mathcal H(b)$ is a cyclic vector for $S_b$ and let $(\lambda_n)_{n \ge 1} \subset \D$ satisfying $\sum_{n=1}^\infty (1 - \abs{\lambda_n}) = \infty$. Without loss of generality, we may suppose that for all $n \ge 1$, $\lambda_n \neq 0$. According to Lemma \ref{SpanLatSb}, $S_b \Span_{\mathcal H(b)}(fk_{\lambda_n} : n \ge 1) \subset \Span_{\mathcal H(b)}(fk_{\lambda_n} : n \ge 1)$. In particular, for all polynomials $p$, we have $pf\in\Span_{\mathcal H(b)}(fk_{\lambda_n} : n \ge 1)$, which gives that
\[
\Span_{\mathcal H(b)}(pf:p\in\mathcal P)\subset \Span_{\mathcal H(b)}(fk_{\lambda_n} : n \ge 1).
\]
We conclude using the cyclicity of $f$.

$(ii)\implies (i)$: Let $(\lambda_n)_{n\geq 1}$ be any sequence such that $|\lambda_n|<\norm{S_b}^{-1}$. Since $\norm{S_b}>1$, we obviously get that $\sum_{n=1}^\infty(1-|\lambda_n|)=\infty$, whence, by hypothesis, we have $\Span_{\mathcal H(b)}(fk_{\lambda_n}:n\geq 1)=\mathcal H(b)$. It remains now to apply Corollary~\ref{nvlExpCyclShift} to conclude that $f$ is cyclic for $S_b$. 
     \end{proof}

\section{Main result on the completeness problem}\label{MainResult}

In this subsection, we shall discuss the problem of completeness of the sequence $(fk_{\lambda_n})_{n\geq 1}$ in $\mathcal H(b)$, where $f\in\mathcal H(b)$ and 
$(\lambda_n)_{n\geq 1}\subset \D$ satisfies $\sum_{n=1}^\infty(1-|\lambda_n|)=\infty$. We start with a simple observation. 
\begin{lem}
Let $(\lambda_n)_{n\geq 1}\subset\D$ such that $\sum_{n=1}^\infty(1-|\lambda_n|)=\infty$, and let $f\in\mathcal H(b)$. If 
\[
\Span_{\mathcal H(b)}(fk_{\lambda_n}:n\geq 1)=\mathcal H(b),
\]
then $f$ is an outer function.
\end{lem}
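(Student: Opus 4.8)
The plan is to reduce the statement to the Hardy-space case already handled in Corollary~\ref{cor:completeness-Hardy}, using the contractive inclusion $\mathcal H(b)\hookrightarrow H^2$ and the density of polynomials. First I recall that $\mathcal H(b)$ is contractively contained in $H^2$: by \eqref{caraHbNExt} we have $\|g\|_2\le\|g\|_b$ for every $g\in\mathcal H(b)$, so the identity map $\iota\colon\mathcal H(b)\to H^2$ is a well-defined bounded operator. Moreover its range is dense in $H^2$: indeed $\mathcal P\subset\mathcal H(b)$, the polynomials are dense in $\mathcal H(b)$ for $\|\cdot\|_b$, and they are dense in $H^2$ for $\|\cdot\|_2$, so $\overline{\iota(\mathcal H(b))}^{\,H^2}\supset\overline{\mathcal P}^{\,H^2}=H^2$. (Recall also each $fk_{\lambda_n}$ lies in $\mathcal H(b)$, hence in $H^2$, since $k_{\lambda_n}$ is a multiplier of $\mathcal H(b)$.)

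Next I invoke the elementary transfer principle: if $T\colon\mathcal H_1\to\mathcal H_2$ is a bounded operator with dense range and $A\subset\mathcal H_1$ satisfies $\Span_{\mathcal H_1}(A)=\mathcal H_1$, then $\Span_{\mathcal H_2}(T(A))=\mathcal H_2$, because
\[
\mathcal H_2=\overline{T(\mathcal H_1)}=\overline{T\bigl(\overline{\Span(A)}\bigr)}\subset\overline{T(\Span(A))}=\overline{\Span(T(A))}.
\]
Applying this with $T=\iota$ and $A=\{fk_{\lambda_n}:n\ge 1\}$, the hypothesis $\Span_{\mathcal H(b)}(fk_{\lambda_n}:n\ge 1)=\mathcal H(b)$ forces $\Span_{H^2}(fk_{\lambda_n}:n\ge 1)=H^2$.

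Finally, since $\sum_{n=1}^\infty(1-|\lambda_n|)=\infty$, Corollary~\ref{cor:completeness-Hardy} (equivalently Lemma~\ref{Linfnoy2-H2H2} with trivial inner part) applies and yields that $f$ is outer, which is exactly the claim. I do not expect a genuine obstacle here; the only point that needs a line of justification is the density of $\iota(\mathcal H(b))$ in $H^2$, which is immediate from the density of polynomials in both spaces, and the rest is the soft functional-analytic argument above combined with the already-established Hardy-space result.
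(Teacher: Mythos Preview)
Your proof is correct and follows essentially the same approach as the paper: both transfer the completeness hypothesis from $\mathcal H(b)$ to $H^2$ via the contractive inclusion $\|\cdot\|_2\le\|\cdot\|_b$ and the density of polynomials, and then invoke Corollary~\ref{cor:completeness-Hardy}. The only cosmetic difference is that you package the transfer step as a general ``bounded operator with dense range preserves completeness'' principle, whereas the paper carries it out concretely by approximating an arbitrary polynomial.
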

\begin{proof}
Let $p$ be a polynomial and $\varepsilon>0$. Then there exists a function $g\in\bigvee(fk_{\lambda_n}:n\geq 1)$ such that $\|p-g\|_b\leq \varepsilon$. Using \eqref{caraHbNExt}, we see that $\|p-g\|_2\leq\varepsilon$, and then $p\in\Span_{H^2}(fk_{\lambda_n}:n\geq 1)$. But $\mathcal P$ is dense in $H^2$, whence 
\[
\Span_{H^2}(fk_{\lambda_n}:n\geq 1)=H^2.
\]
Now Corollary~\ref{cor:completeness-Hardy} implies that $f$ is outer.
\end{proof}
When one studies the problem of completeness of $(fk_{\lambda_n})_{n\geq 1}$ in $\mathcal H(b)$, we may therefore assume, without loss of generality, that $f$ is outer. 

\begin{theo}\label{MHypfcyclSb}
    Let $f \in \mathcal{H}(b)$ be an outer function and let $(\lambda_n)_{n \ge 1} \subset \D$ satisfying $\sum_{n=1}^\infty (1 - \abs{\lambda_n}) =\infty$. If $\frac{b}{f} \in L^\infty(\mathbb T)$, then 
\begin{equation}\label{eq:completude-MHypfcyclSb}
\Span_{\mathcal H(b)}(fk_{\lambda_n} : n \ge 1) = \mathcal{H}(b).
\end{equation}
In particular, $f$ is cyclic for $S_b$.
\end{theo}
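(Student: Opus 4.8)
The plan is to establish the completeness statement \eqref{eq:completude-MHypfcyclSb} directly, by the orthogonality criterion; cyclicity of $f$ is then automatic from Corollary~\ref{CaraCyclSbSpanHb} (the proof below works for an \emph{arbitrary} non-Blaschke sequence, which is exactly condition (ii) there; alternatively one applies it to a sequence inside $\{|\lambda|<\|S_b\|^{-1}\}$ and invokes Corollary~\ref{nvlExpCyclShift}). So fix $(\lambda_n)_{n\ge1}\subset\D\setminus\{0\}$ with $\sum_{n\ge1}(1-|\lambda_n|)=\infty$ (the case of some $\lambda_n=0$ is harmless, cf. Lemma~\ref{SpanLatSb}), and suppose $h\in\mathcal H(b)$ satisfies $\langle h,fk_{\lambda_n}\rangle_b=0$ for every $n$; the goal is $h=0$.

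First I would repeat the computation in the proof of Lemma~\ref{SpanLatSb}: expanding $\langle h,fk_{\lambda_n}\rangle_b$ via \eqref{caraHbNExt}, \eqref{decompfnoy+} and Lemma~\ref{Lem-Cauchy-Transform}(ii), with $g\in H^2$ the function attached to $f$ by \eqref{eq223:carac-membership-Hb}, shows that
\[
\Phi:=C(h\overline f)+C(h^+\overline{f^+})+\frac{zgh^+}{a}
\]
lies in $\mathcal N^+$ (the Cauchy transforms are of $L^1$ functions, and $gh^+\in H^1$, $1/a\in\mathcal N^+$ since $a$ is outer) and vanishes on the non-Blaschke sequence $(\lambda_n)$, hence $\Phi\equiv0$ on $\D$; evaluating at $0$ and using $a(0)\ne0$ gives $\langle h,f\rangle_b=0$. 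Set $\widetilde G:=C\!\left(\overline h f+\overline{h^+}f^+\right)$, so that $\widetilde G\in H_0^{1,\infty}$ by Theorem~\ref{Kolmogorov} and $\widetilde G(0)=\overline{\langle h,f\rangle_b}=0$. Passing to boundary values in $\Phi\equiv0$, applying Lemma~\ref{Lem-Cauchy-Transform}(i) to $h\overline f$ and to $h^+\overline{f^+}$, using $\langle h,f\rangle_b=0$, and finally substituting $zg=b\overline f-a\overline{f^+}$ (the a.e. conjugate of \eqref{eq223:carac-membership-Hb}) collapses everything to the clean identity
\[
\overline{\widetilde G}=\overline f\Bigl(h+\tfrac ba h^+\Bigr)\qquad\text{a.e. on }\T .
\]
Writing $\widetilde h:=h+\tfrac ba h^+$, one has $\widetilde h\in\mathcal N^+$ and, by \eqref{DecompScalfNoyrep}, $\widetilde h(\lambda)=\langle h,k_\lambda\rangle_b$ for $\lambda\in\D$; moreover $\overline{\widetilde h}=\widetilde G/f$ on $\T$, which extends to an $\mathcal N^+$ function because $f$ is outer.

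The crux — and what I expect to be the main obstacle — is to show that $\overline{\widetilde G}$ lies again in $H_0^{1,\infty}$, equivalently that $\widetilde h\in L_0^{1,\infty}(\T)$ (membership of $\widetilde h$ and of $\overline{\widetilde h}$ in $\mathcal N^+$ is already in hand). This is precisely where the hypothesis $b/f\in L^\infty(\T)$ is used: since $f$ is outer we get $b/f\in\mathcal N^+\cap L^\infty=H^\infty$, and one writes the dangerous term as $\tfrac ba h^+=\tfrac bf\cdot\tfrac fa\,h^+$, the idea being that the bounded analytic factor $b/f$ absorbs the unbounded factor $b/a=b/\sqrt{1-|b|^2}$ against $f$ (note $|b|\le\|b/f\|_\infty\,|f|$), leaving a function in $L_0^{1,\infty}$. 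The precise distributional estimate relating $|b|$, $|a|$ and $|f|$ is the technical heart of the argument, and I would concentrate my effort there.

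Granting this, Corollary~\ref{FctFctConjHcst} applied to $\widetilde G$ (or to $\widetilde h$) forces $\widetilde G$ to be constant, and since $\widetilde G(0)=0$ we get $\widetilde G\equiv0$. Then $\overline f\,\widetilde h=\overline{\widetilde G}\equiv0$ a.e. on $\T$; as $f$ is outer, $\overline f\ne0$ a.e., so $\widetilde h=0$ a.e. on $\T$, whence $\widetilde h\equiv0$ on $\D$ (an $\mathcal N^+$ function vanishing a.e. on $\T$ is $0$). Therefore $\langle h,k_\lambda\rangle_b=0$ for every $\lambda\in\D$, and since $\{k_\lambda:\lambda\in\D\}$ is complete in $\mathcal H(b)$ (the criterion recalled in Section~\ref{Preliminaries}, applied to any non-Blaschke set), we conclude $h=0$. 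This proves \eqref{eq:completude-MHypfcyclSb}, and cyclicity of $f$ follows.
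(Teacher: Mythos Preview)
Your scaffold is right and parallels the paper: orthogonality $h\perp_b fk_{\lambda_n}$ leads to the vanishing of an $\mathcal N^+$ function on a non-Blaschke sequence, then boundary values, then Corollary~\ref{FctFctConjHcst}. The identity $\overline{\widetilde G}=\bar f\,\widetilde h$ with $\widetilde h=h+\tfrac ba h^+$ is correct, as is your endgame via $\widetilde h(\lambda)=\langle h,k_\lambda\rangle_b$ once $\widetilde h$ is shown to vanish.

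The gap you flag is genuine, and the route you sketch does not close it. Writing $\tfrac ba h^+=\tfrac bf\cdot\tfrac fa\,h^+$ does not help: there is no reason for $(f/a)h^+$ to lie in $L_0^{1,\infty}$, since on the set where $|b|$ is near $1$ the inequality $|b|\le M|f|$ only gives a \emph{lower} bound on $|f|$, not control of $|f|/|a|$. A purely pointwise estimate cannot produce the weak-$L^1$ bound; the orthogonality identity must be fed back in. The paper does this by bringing in the second $\mathcal H(b)$-relation, namely the function $h_1\in H^2$ attached to $h$ (not $f$) by $\bar b h=\bar a h^++\overline{zh_1}$. Multiplying the boundary identity by $\bar b$ and using $|a|^2+|b|^2=1$ together with this relation isolates
\[
\frac{h^+}{a}=-\frac{\bar b}{\bar f}\bigl(C(\varphi)-\varphi\bigr)-\overline{zh_1}\qquad\bigl(\varphi=h\bar f+h^+\overline{f^+}\bigr),
\]
which in your notation is $(\bar b/\bar f)\,\overline{\widetilde G}-\overline{zh_1}$. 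Now $b/f\in L^\infty$ and Kolmogorov give $h^+/a\in L_0^{1,\infty}$ at once; the conjugate, via Lemma~\ref{Lem-Cauchy-Transform}(i), lies in $H_0^{1,\infty}$ as well. So Corollary~\ref{FctFctConjHcst} is applied to $h^+/a$ rather than to $\widetilde h$: it is constant, then $0$ by evaluation at the origin, and with $h^+\equiv 0$ the identity collapses to $C(h\bar f)\equiv 0$, whence $h=0$ as in Lemma~\ref{Linfnoy2-H2H2}. (Your $\widetilde h$-approach \emph{can} be salvaged without $h_1$, by splitting $\T$: on $\{|f|>1/(2M)\}$ use $|\widetilde h|\le 2M|\widetilde G|\in L_0^{1,\infty}$ from your identity, and on the complement $|b|\le 1/2$ forces $|b/a|$ bounded so $\widetilde h\in L^2$ there --- but this combines both expressions for $\widetilde h$, not the factorization you propose.)
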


\begin{proof}
Let $h \in \mathcal{H}(b)$ such that $h \perp_b fk_{\lambda_n}$ for all $n \ge 1$. Using the same computations as in the proof of Lemma \ref{SpanLatSb}, we get 
\begin{equation}\label{Eq1MHypfcyclSb}
aC(h\overline{f}) + aC(h^+ \overline{f^+}) + zgh^+ \equiv 0,\qquad \mbox{on $\D$,}
\end{equation}
where $g$ satisfies $\overline{b}f = \overline{a}f^+ + \overline{z}\overline{g}$ a.e. on $\T$. Taking the radial limits in \eqref{Eq1MHypfcyclSb} and using the definition of $g$, we deduce that
\[
aC(h\overline{f})+aC(h^+\overline{f^+})-a\overline{f^+}h^+ +b\overline{f}h^+=0\qquad \mbox{a.e. on }\T.
\]
Multiplying by $\overline{b}$ and using the relation $|a|^2+|b|^2=1$ a.e. on $\T$, we obtain
\[
a\left(\overline{b}C(h\overline{f})+\overline{b}C(h^+\overline{f^+})-\overline{bf^+}h^+ -\overline{af}h^+ \right)+\overline{f}h^+=0\qquad \mbox{a.e. on }\T.
\]
Since $a\neq 0$ and $f\neq 0$ almost everywhere on $\T$, we get
\[
\frac{1}{\overline{f}}\left(\overline{b}C(h\overline{f})+\overline{b}C(h^+\overline{f^+})-\overline{bf^+}h^+ -\overline{af}h^+\right)=-\frac{h^+}{a}.
\]
Using the fact that $h\in \mathcal H(b)$, there exists $h_1\in H^2$ satisfying $\overline{b}h=\overline{a}h^++\overline{zh_1}$ a.e. on $\T$, which gives
\[
\frac{1}{\overline{f}}\left(\overline{b}C(h\overline{f}+h^+\overline{f^+})-\overline{bf^+}h^+ -\overline{bf}h+\overline{zh_1f}\right)=-\frac{h^+}{a}.
\]
If $\varphi=h\overline{f}+h^+\overline{f^+}$, we can rewrite this identity as
\begin{equation}\label{eq2343434mjljlzjmlsqj}
\frac{h^+}{a}=-\frac{\overline{b}(C(\varphi)-\varphi)+\overline{zh_1f}}{\overline{f}}\qquad \mbox{a.e. on }\T.
\end{equation}
We claim that $\frac{h^+}{a}\in H_0^{1,\infty}\cap\overline{H_0^{1,\infty}}$. Indeed, on one hand, observe that $\frac{h^+}{a}\in\mathcal N^+$, and since $\varphi\in L^1(\T)$, by Kolmogorov Theorem, $C(\varphi)-\varphi$ belongs to $L^{1,\infty}_0(\T)$. But using that $\frac{\overline{b}}{\overline{f}}\in L^\infty(\T)$, we get that 
\[
\frac{\overline{b}}{\overline{f}}(C(\varphi)-\varphi)\in L_0^{1,\infty}(\T).
\]
Moreover, $\overline{zh_1}\in L^2(\T)\subset L^1(\T)\subset L_0^{1,\infty}(\T)$, whence it follows from \eqref{eq2343434mjljlzjmlsqj} that $\frac{h^+}{a}\in L_0^{1,\infty}(\T)$ and thus $\frac{h^+}{a}\in H_0^{1,\infty}$. On the other hand, according to Lemma~\ref{Lem-Cauchy-Transform} $(i)$, we have
\begin{equation}\label{erefssd123946}
\frac{\overline{h^+}}{\overline{a}}=-\frac{b}{f}\left(\overline{C(\varphi)}-\overline{\varphi}\right)-zh_1=-\frac{b}{f}\left(\overline{\widehat{\varphi}(0)}-C(\overline{\varphi})\right)-zh_1.
\end{equation}
Using once more  Kolmogorov Theorem, this identity shows that $\frac{\overline{h^+}}{\overline{a}}\in H_0^{1,\infty}$. Finally, we have $\frac{h^+}{a}\in H_0^{1,\infty}\cap \overline{H_0^{1,\infty}}$. It follows from Corollary~\ref{FctFctConjHcst} that $\frac{h^+}{a}$ is a constant function, say equals to $c\in\C$. In particular, with \eqref{erefssd123946} and Theorem~\ref{eq:Alexandrov1}, we obtain that
\[
-\frac{b(z)}{f(z)}\left(\overline{\widehat{\varphi}(0)}-C(\overline{\varphi})(z)\right)-zh_1(z)=\overline{c}
,\qquad z\in\D.\]
Evaluating at $z=0$ gives
\[
-\frac{b(0)}{f(0)}\left(\overline{\widehat{\varphi}(0)}-C(\overline{\varphi})(0)\right)=\overline{c}.
\]
But 
\[
C(\overline{\varphi})(0)=\int_\T \overline{\varphi(\xi)}\,dm(\xi)=\overline{\widehat{\varphi}(0)},
\]
whence $c=0$. In other words, $h^+=0$ and coming back to \eqref{Eq1MHypfcyclSb}, we deduce that $C(h\overline{f})\equiv 0$ on $\D$. In particular, there exists $\psi\in H_0^1$ such that $h\overline{f}=\overline{\psi}$. Hence $h=\frac{\overline{\psi}}{\overline{f}}\in H^2\cap \overline{H_0^2}=\{0\}$, which concludes the proof.
\end{proof}

\begin{rem}
The sufficient condition $b/f\in L^\infty(\T)$ implies that the set 
\[
Z(f):=\{\zeta\in\T:\liminf_{\substack{z\to\zeta\\z\in\D}}|f(z)|=0\}
\]
is included in the boundary spectrum of $b$, where we recall that the boundary of $b$ is defined as 
\[
\sigma(b)=\{\zeta\in\T:\liminf_{\substack{z\to\zeta\\z\in\D}}|b(z)|<1\}.
\]
Indeed, let us assume that there exists $\zeta\in Z(f)$ and $\zeta\notin\sigma(b)$. Then there exists a sequence $(z_n)_{n\geq 1}$ in $\D$ satisfying $z_n\to \zeta$ and $|f(z_n)|\to 0$, $n\to \infty$, and since $\zeta\notin\sigma(b)$, $\lim_{z\to\zeta}|b(z)|=1$, which implies that $|b(z_n)|\to 1$. This contradicts the fact that $b/f\in H^\infty$. 
\end{rem}

\begin{rem}
It should be noted that $b/f\in L^\infty(\T)$ is not necessary to have \eqref{eq:completude-MHypfcyclSb}. Indeed, let $b(z)=(1+z)/2$ and $f(z)=z-i$, $z\in\D$. Then $f\in\mathcal H(b)$ and since $f(1)=1-i\neq 0$, Corollary 4.2 from \cite{CyclNonExtr} implies that $f$ is cyclic for $S_b$. In particular, according to Corollary~\ref{CaraCyclSbSpanHb}, the property \eqref{eq:completude-MHypfcyclSb} is satisfied. However, $b/f \notin L^\infty(\T)$. 
\end{rem}

\section{Some consequences on cyclicity}\label{Consequences}
In this section, we shall discuss some easy consequences of our main theorem on cyclicity. 
\begin{coro}\label{CarabCyclSbExt}
    \begin{enumerate}
        \item[(i)] Let $f\in \mathcal{H}(b)$ satisfying $\inf_\D \abs{f} > 0$. Then $f$ is cyclic for $S_b$.
        \item[(ii)] The function $b$ is cyclic for $S_b$ if and only if $b$ is outer.
    \end{enumerate}
\end{coro}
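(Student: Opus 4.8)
For part (i), the idea is to apply Theorem \ref{MHypfcyclSb} directly. If $f\in\mathcal H(b)$ satisfies $\inf_{\D}|f|>0$, then first $f$ is outer: a function bounded away from zero on $\D$ has no zeros and no singular inner factor (since the modulus of a singular inner function dips to zero along some radius). Next, $b/f$ is bounded on $\D$ because $|b|\le 1$ on $\D$ and $|f|\ge\delta>0$ there, so $\|b/f\|_{H^\infty}\le 1/\delta$, and in particular $b/f\in L^\infty(\T)$ after taking radial limits. Then pick any sequence $(\lambda_n)_{n\ge1}\subset\D$ with $\sum_{n\ge1}(1-|\lambda_n|)=\infty$; Theorem \ref{MHypfcyclSb} gives $\Span_{\mathcal H(b)}(fk_{\lambda_n}:n\ge1)=\mathcal H(b)$, and the ``in particular'' clause of that theorem (equivalently Corollary \ref{CaraCyclSbSpanHb}) yields that $f$ is cyclic for $S_b$.

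For part (ii), the forward implication is immediate from Lemma \ref{outer-is-necessary}: any cyclic vector for $S_b$ is outer, so in particular if $b$ is cyclic then $b$ is outer. For the converse, suppose $b$ is outer. The plan is again to invoke Theorem \ref{MHypfcyclSb} with $f=b$. Two hypotheses must be checked: that $b\in\mathcal H(b)$, which is standard (indeed $b=b\cdot 1$ and $1\in\mathcal H(b)$, or directly $bk_0=b\in\mathcal H(b)$ as recalled after \eqref{eq223:carac-membership-Hb}), and that $b/f=b/b\equiv 1\in L^\infty(\T)$, which is trivial. Since $b$ is assumed outer, all the hypotheses of Theorem \ref{MHypfcyclSb} are met, and we conclude that $b$ is cyclic for $S_b$.

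I do not anticipate a serious obstacle here — both parts are designed to be quick applications of the main theorem. The only point requiring a word of care is the verification in (i) that $\inf_\D|f|>0$ forces $f$ to be outer (ruling out a nontrivial singular inner factor), and the observation that the $L^\infty(\T)$ hypothesis on $b/f$ in Theorem \ref{MHypfcyclSb} is automatically satisfied once $b/f$ is bounded on $\D$, which holds because $\|b\|_\infty\le1$. Everything else is bookkeeping.
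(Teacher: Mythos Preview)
Your proposal is correct and follows essentially the same approach as the paper: both parts reduce immediately to Theorem~\ref{MHypfcyclSb} (after noting that $\inf_\D|f|>0$ forces $f$ outer and makes $b/f$ bounded, and that $b/b\equiv 1$), together with Lemma~\ref{outer-is-necessary} for the necessary direction in (ii).
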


\begin{proof}
 $(i)$ First it is known that the condition $\inf_\D \abs{f} > 0$ implies that $f$ is outer. See \cite[Page 67]{Nikolski}. Moreover, $\frac{b}{f}\in L^\infty(\T)$ and Theorem~\ref{MHypfcyclSb} implies that $f$ is cyclic.

$(ii)$ The second assertion follows immediately from Lemma~\ref{outer-is-necessary} and Theorem~\ref{MHypfcyclSb}. 
 \end{proof}
 
 Note that Corollary~\ref{CarabCyclSbExt} $(i)$ generalizes \cite[Corollary 3.4]{CyclFG}, which was proved under the additional  assumption that $f\in\mbox{Hol}(\overline{\D})$.   
\begin{expl}
(a) Let $\lambda\in\D$. Then 
\[
|k_\lambda^b(z)|=\left|\frac{1-\overline{b(\lambda)}b(z)}{1-\overline{\lambda}z}\right|\geq \frac{1-|b(\lambda)|}{2}>0.
\]
Thus, it follows from Corollary~\ref{CarabCyclSbExt} that $k_\lambda^b$ is cyclic for $S_b$.

(b) If $b$ is outer and $\lambda\in\D$, then $bk_\lambda$ is outer (being the product of two outer functions) and $b/bk_\lambda=1/k_\lambda=1-\overline{\lambda}z\in L^\infty(\T)$. Thus Theorem~\ref{MHypfcyclSb} implies that $bk_\lambda$ is cyclic.

Note that these two results on $k_\lambda^b$ and $bk_\lambda$ were already observed in \cite[Proposition 5.7]{CyclFG} but with the additional assumption that $(a,b)$ is a corona pair, meaning that $\inf_\D(|a|+|b|)>0$. We see here that this  assumption can be omitted. 
\end{expl}

For the next result, recall the definition of the family of Alexandrov-Clark measures $\{\mu_\xi\}_{\xi\in\T}$ associated to $b$. See \eqref{eq:AC-measures}.
\begin{coro}\label{Des1mcbcyclSb}
Let $c\in\C$. The function $1-cb$ is cyclic for $S_b$ if and only if we are in one of the following three cases:
\begin{enumerate}
\item[(i)] $|c|<1$;
\item[(ii)] $|c|=1$ and $\mu_{\overline c}$ is absolutely continuous with respect to Lebesgue measure $m$;
\item[(iii)] $|c|>1$ and $1-cb$ is outer.
\end{enumerate}
 \end{coro}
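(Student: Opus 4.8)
The plan is to analyze the three cases for $c \in \C$ separately, using Theorem~\ref{MHypfcyclSb} for the ``easy'' directions and known structural results for the remaining ones.

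\medskip

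\textbf{Case $|c| < 1$.} Here $f = 1 - cb$ satisfies $\inf_\D |f| = \inf_\D |1 - cb| \geq 1 - |c| \|b\|_\infty \geq 1 - |c| > 0$, since $\|b\|_\infty \leq 1$. Hence by Corollary~\ref{CarabCyclSbExt}$(i)$, $f$ is cyclic for $S_b$. This settles direction $(i) \Rightarrow$ cyclic; moreover it shows that in this regime $f$ is always cyclic, so there is nothing to prove in the converse.

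\medskip

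\textbf{Case $|c| > 1$.} First, if $1 - cb$ is cyclic then it is outer by Lemma~\ref{outer-is-necessary}, giving the forward implication. Conversely, assume $1 - cb$ is outer. Write $1 - cb = -c(b - 1/c)$, and note $|1/c| < 1$. The point is that $b/f = b/(1-cb)$ is bounded on $\T$: indeed $|1 - cb(z)| \geq |c| - 1 > 0$ for all $z$, wait — more carefully, $|1-cb(z)| \geq |c||b(z)| - 1$ is not useful, but $|1 - cb(z)| \geq 1 - |c| \cdot |b(z)|$ can be negative; instead use $|1-cb(z)|\ge |c|-|c||b(z)|\cdot$ — this also fails. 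The correct observation: since $|c| > 1$ we have $|1 - cb(z)| \geq |c| \cdot |b(z) - 1/c|$, and one must check $|b(z) - 1/c|$ is bounded below, which is false in general. So instead I would argue: $b/(1-cb)$ is bounded on $\T$ iff $(1-cb)$ is bounded away from $0$ on the set where $|b|$ is close to $1$, and on the complement $b$ is bounded away from $\T$ in modulus while $1-cb$ cannot vanish there either when $|c|>1$ since $|1-cb(\zeta)| \geq |c| - 1 > 0$ when... Actually the clean statement is: $|1 - cb(\zeta)| \geq ||c| - 1| > 0$ fails, but $|1-cb| \ge |c\cdot b| - 1 \ge |c| - 1$ only when $|b|=1$. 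The honest route, which I would carry out, is: on $\T$ we have $|b| \le 1$, so $|1-cb| \ge 1 - |c| \cdot |b|$; this is $\ge 0$ but can be $0$. However $b/(1-cb) = \frac{1}{1/b - c}$ where $1/b$ has modulus $\ge 1$ a.e., hence $|1/b - c| \ge |1/b| - |c|$, negative again. The resolution is that when $|c|>1$, the function $1 - cb$ can only vanish on $\T$ where $|b|=1/|c|$, a bounded-away-from-$1$ value, so $1-cb$ being outer forces... Here I would instead invoke a different argument: apply Theorem~\ref{MHypfcyclSb} after noting $b/f \in L^\infty(\T)$ precisely because $|1-cb(\zeta)| \ge |c|\cdot|b(\zeta)| - 1$ combined with the fact that where $|b(\zeta)|$ is small, $|1 - cb(\zeta)|$ is close to $1$; quantitatively, if $|b(\zeta)| \le \frac{1}{2|c|}$ then $|1-cb(\zeta)| \ge 1/2$, while if $|b(\zeta)| \ge \frac{1}{2|c|}$ then $|b(\zeta)/(1-cb(\zeta))|$ — still need a lower bound on the denominator. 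I anticipate \emph{this is the main obstacle}: showing $b/(1-cb) \in L^\infty(\T)$ when $|c|>1$, or finding the right substitute. I expect the resolution uses the Remark after Theorem~\ref{MHypfcyclSb}: it suffices that $Z(1-cb) \subseteq \sigma(b)$, and since $1-cb$ is outer and $|c|>1$ — one shows that if $\liminf_{z\to\zeta}|1-cb(z)| = 0$ then $\liminf_{z\to\zeta}|b(z)| = 1/|c| < 1$, so $\zeta \in \sigma(b)$. Then one still needs the quantitative $L^\infty$ bound $b/f \in L^\infty(\T)$; alternatively, one rescales: $1 - cb = (1 - \bar\alpha b) \cdot u$ type decompositions are not available, so I would directly prove $b/(1-cb) \in L^\infty(\T)$ by: $1-cb$ outer and in $H^\infty$ with $1/(1-cb)$ not necessarily bounded — hmm. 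I would ultimately reduce to the $|c|=1$ case below by a limiting or comparison argument, or cite that $b/(1-cb)\in H^\infty$ follows from $(1-cb)^{-1}\in H^\infty$ which holds iff $1-cb$ is bounded below, equivalent to $|b|$ bounded away from $1/|c|$ — not automatic, so outerness alone does not give boundedness and I suspect the intended proof invokes a finer result (perhaps from \cite{CyclFG} or \cite{CyclBG}) characterizing cyclicity of $1-cb$ via the structure near points where $|b| = 1/|c|$.

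\medskip

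\textbf{Case $|c| = 1$.} Write $c = \bar\alpha$ with $\alpha \in \T$, so $f = 1 - \bar\alpha b$. By Lemma~\ref{Lem-Tunbounded-Clark}$(i)$, the operator $T_{1-\bar\alpha b}T_{\overline{F_\alpha}}$ is an isometry from $H^2$ onto $\Span_{\mathcal H(b)}((1-\bar\alpha b)z^n : n \geq 0)$. But $\Span_{\mathcal H(b)}((1-\bar\alpha b)z^n : n \geq 0)$ is exactly the cyclic subspace generated by $f = 1 - \bar\alpha b$ under $S_b$. Hence $f$ is cyclic for $S_b$ if and only if this range is all of $\mathcal H(b)$, i.e., if and only if $T_{1-\bar\alpha b}T_{\overline{F_\alpha}}$ is onto, which by Lemma~\ref{Lem-Tunbounded-Clark}$(ii)$ holds if and only if $\mu_\alpha$ is absolutely continuous with respect to $m$. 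Since $\alpha = \bar c$, this is precisely condition $(ii)$. This case is clean and I expect no difficulty here; it is really just an unpacking of Lemma~\ref{Lem-Tunbounded-Clark}.

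\medskip

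Assembling the three cases gives the equivalence. The only genuinely delicate point, as flagged above, is the converse in Case $|c| > 1$: verifying that ``$1-cb$ outer'' lets us apply Theorem~\ref{MHypfcyclSb}, which requires establishing $b/(1-cb) \in L^\infty(\T)$ (or circumventing it). I would spend the bulk of the write-up there, likely by showing that outerness of $1-cb$ together with $|c|>1$ forces $1/(1-cb) \in H^\infty$ — equivalently that $1-cb$ is an invertible element of $H^\infty$ — which would then immediately give $b/(1-cb) \in H^\infty \subset L^\infty(\T)$ and hence cyclicity via Theorem~\ref{MHypfcyclSb}; if invertibility fails in general, I would fall back on the $Z(f) \subseteq \sigma(b)$ criterion from the Remark combined with a direct boundedness estimate on the arcs where $|b|$ stays away from $1/|c|$.
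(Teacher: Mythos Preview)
Your treatment of Cases $|c|<1$ and $|c|=1$ matches the paper exactly: Corollary~\ref{CarabCyclSbExt}(i) for the first, and Lemma~\ref{Lem-Tunbounded-Clark} for the second.

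The gap is Case $|c|>1$. Your whole strategy there is to force $b/(1-cb)\in L^\infty(\T)$ so that Theorem~\ref{MHypfcyclSb} applies, and you correctly sense this is where the difficulty lies --- but the claim is simply false in general. Outerness of $1-cb$ only gives $\log|1-cb|\in L^1(\T)$; it does not prevent $|1-cb|$ from being arbitrarily small on sets of positive measure (take $b$ outer with $|b|$ approaching $1/|c|$ along a sequence of arcs). So $1/(1-cb)\notin H^\infty$ in general, and none of the fallback ideas you sketch (the $Z(f)\subset\sigma(b)$ remark, invertibility in $H^\infty$, reduction to $|c|=1$) close the gap: the Remark after Theorem~\ref{MHypfcyclSb} is only a necessary consequence of $b/f\in L^\infty$, not a substitute for it.

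The paper abandons Theorem~\ref{MHypfcyclSb} entirely for this case and instead argues directly via Corollary~\ref{nvlExpCyclShift} and the explicit formulas~\eqref{DecompScalfNoyrep}. If $h\in\mathcal H(b)$ is orthogonal to every $(1-cb)k_\lambda$, then
\[
0=\scal{h}{k_\lambda}_b-\overline{c}\,\scal{h}{bk_\lambda}_b
= h(\lambda)+\frac{b(\lambda)}{a(\lambda)}h^+(\lambda)-\overline{c}\,\frac{h^+(\lambda)}{a(\lambda)},
\]
so $ah+(b-\overline{c})h^+\equiv 0$ on $\D$. Passing to $\T$, multiplying by $\overline{a}$, using $|a|^2+|b|^2=1$ and the relation~\eqref{eq223:carac-membership-Hb} for $h$, one arrives at
\[
\frac{h}{b-\overline{c}}=\frac{\overline{zg}}{1-\overline{c}b}\qquad\text{a.e.\ on }\T.
\]
Now the hypothesis $|c|>1$ is used in the \emph{right} place: $\inf_\D|b-\overline{c}|\ge |c|-1>0$, so the left side is in $H^2$; and outerness of $1-cb$ makes the conjugate of the left side lie in $H_0^2$. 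Hence $h/(b-\overline{c})\in H^2\cap\overline{H_0^2}=\{0\}$, giving $h=0$. The point is that the lower bound one actually needs is on $|b-\overline{c}|$, not on $|1-cb|$.
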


\begin{proof}
$(i)$ Assume that $|c|<1$. Observe that 
\[
\inf_\D \abs{1 - cb} \ge 1 - \abs{c} > 0,
\] 
and Corollary \ref{CarabCyclSbExt} implies that $1-cb$ is cyclic for $S_b$. 

$(ii)$ Assume now that $|c|=1$ and denote by $\xi=\overline{c}$. Using Lemma \ref{Lem-Tunbounded-Clark} $(i)$, we have 
\[
\Span_{\mathcal H(b)}((1-cb)z^n:n\geq 0)=T_{1-\overline{\xi}b}T_{\overline{F_\xi}}H^2,
\]
where $F_\xi$ is the $H^2$ outer function defined by $F_\xi=\frac{a}{1-\overline{\xi}b}$. Hence, $1-cb$ is cyclic for $S_b$ if and only if $T_{1-\overline{\xi}b}T_{\overline{F_\xi}}H^2=\mathcal H(b)$. But it follows from Lemma \ref{Lem-Tunbounded-Clark} $(ii)$ that this is equivalent to the fact that $\mu_\xi$ is absolutely continuous with respect to $m$.

$(iii)$ Assume now that $|c|>1$. According to Lemma~\ref{outer-is-necessary}, we may assume that $1-cb$ is outer. According to Corollary~\ref{nvlExpCyclShift}, we have to prove that 
\[
\Span_{\mathcal H(b)}((1-cb)k_\lambda:|\lambda|<\norm{S_b}^{-1})=\mathcal H(b).
\]
For that purpose, let $f\in\mathcal H(b)$ and assume that $f$ is orthogonal to $(1-cb)k_\lambda$ in $\mathcal H(b)$, for every $|\lambda|<\norm{S_b}^{-1}$. Then
\[
0=\scal{f}{(1-cb)k_\lambda}_b=\scal{f}{k_\lambda}_b-\overline{c}\scal{f}{bk_\lambda}_b,
\]
and, according to \eqref{DecompScalfNoyrep}, we deduce that 
\[
0=f(\lambda)+\frac{b(\lambda)}{a(\lambda)}f^{+}(\lambda)-\overline{c}\frac{f^+(\lambda)}{a(\lambda)},
\]
for every $|\lambda|<\norm{S_b}^{-1}$. Hence, $af+bf^+-\overline{c}f^+$ vanishes on the disc of center $0$ and radius $\norm{S_b}^{-1}$, and by analytic continuation, it follows that 
$af+bf^+-\overline{c}f^+\equiv 0$ on $\D$. Taking radial limits, we then get 
\[
af+bf^+-\overline{c}f^+=0\qquad \mbox{a.e. on }\T.
\]
Multiply by $\overline{a}$ and use the fact that $|a|^2+|b|^2=1$ a.e. on $\T$ to get 
\[
(1-|b|^2)f+\overline{a}(b-\overline{c})f^+=0\qquad \mbox{a.e. on }\T.
\]
If $g$ is the $H^2$ function satisfying \eqref{eq223:carac-membership-Hb}, we can rewrite the last identity as
\[
f(1-\overline{cb})=(b-\overline{c})\overline{zg}\qquad \mbox{a.e. on }\T,
\]
whence 
\[
\frac{f}{b-\overline{c}}=\frac{\overline{zg}}{1-\overline{cb}}\qquad \mbox{a.e. on }\T.
\]
Observe now that $\inf_\D|b-\overline{c}|\geq |c|-1>0$, whence $\frac{f}{b-\overline{c}}\in H^2$. Moreover, using that $1-cb$ is outer, we have $\frac{\overline{f}}{\overline{b}-c}=\frac{zg}{1-cb}\in L^2(\T)\cap\mathcal N^+=H^2$, and since it vanishes at $0$, $\frac{\overline{f}}{\overline{b}-c}\in H_0^2$. Hence $\frac{f}{b-\overline{c}}\in H^2\cap\overline{H_0^2}=\{0\}$. This concludes the proof of the fact that if $|c|>1$ and $1-cb$ is outer, then $1-cb$ is cyclic for $S_b$. 
\end{proof}

%

Aleman and Malmann have shown in \cite[Theorem 5.11]{InvSubHbForm} that if $\mathcal E$ is a closed invariant subspace of $S_b$, then $\mbox{dim}(\mathcal E\ominus S_b\mathcal E)=1$ and if $\psi\in\mathcal E\ominus S_b\mathcal E$, $\psi\neq 0$,  then 
\begin{equation}\label{thm-Aleman-Malmann}
\mathcal E=\mathcal E_\psi:=\left \{ g \in \mathcal{H}(b) : \frac{g}{\psi} \in H^2, \frac{g}{\psi}\psi^+ \in H^2 \right\}.
\end{equation}
Using this description, we can give a nice characterization of spaces $\mathcal H(b)$ in which the sequence $(fk_{\lambda_n})_{n\geq 1}$ is complete for every outer function $f\in\mathcal H(b)$. 

\begin{theo}\label{EspImadenseSpanHB}
     Let $(\lambda_n)_{n \ge 1} \subset \D$ satisfying $\sum_{n=1}^\infty (1 - \abs{\lambda_n}) =\infty$. Then the following assertions are equivalent:
     \begin{enumerate}
         \item[(i)] for every outer function $f \in \mathcal{H}(b)$, we have 
         \[ \Span_{\mathcal H(b)}(fk_{\lambda_n} : n \ge 1) = \mathcal{H}(b);\]
         \item[(ii)] the subspace $aH^2$ is dense in $\mathcal{H}(b)$.
     \end{enumerate}
\end{theo}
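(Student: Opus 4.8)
The plan is to prove both implications by combining Theorem~\ref{MHypfcyclSb}, the Aleman--Malman description \eqref{thm-Aleman-Malmann} of invariant subspaces, and the rigidity criterion of Theorem~\ref{thm-densite-ma-rigid}. The implication $(ii)\Rightarrow(i)$ should be the softer one: assume $aH^2$ is dense in $\mathcal H(b)$ and let $f\in\mathcal H(b)$ be outer. Consider $\mathcal E:=\Span_{\mathcal H(b)}(fk_{\lambda_n}:n\ge1)$; by Lemma~\ref{SpanLatSb} (after discarding the possible $\lambda_n=0$, which only enlarges nothing essential) $\mathcal E$ is $S_b$-invariant, so by \eqref{thm-Aleman-Malmann} we have $\mathcal E=\mathcal E_\psi$ for some $\psi\in\mathcal E\ominus S_b\mathcal E$, $\psi\neq0$. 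Since $f\in\mathcal E=\mathcal E_\psi$, the function $f/\psi$ lies in $H^2$; as $f$ is outer, $\psi$ must be outer too and in particular $\psi$ has no inner factor. I would then argue that $\psi$, being the extremal function of an invariant subspace containing an outer function, forces $\mathcal E_\psi$ to contain $aH^2$: indeed one checks directly from \eqref{thm-Aleman-Malmann} that $ah\in\mathcal E_\psi$ for every $h\in H^2$ precisely because $ah/\psi$ and $(ah/\psi)\psi^+$ can be controlled — here I expect to use that $\mathcal M(a)=aH^2$ is contractively contained in $\mathcal H(b)$ (see \eqref{eq:Macontractivelycontained}) together with the fact that multiplication by the outer function $1/\psi$ behaves well on the relevant ranges. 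Once $aH^2\subset\mathcal E$, density of $aH^2$ gives $\mathcal E=\mathcal H(b)$, which is $(i)$.

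For the converse $(i)\Rightarrow(ii)$ I would argue by contraposition, exploiting Theorem~\ref{thm-densite-ma-rigid}. Suppose $aH^2$ is \emph{not} dense in $\mathcal H(b)$. We may assume $b(0)=0$ (the general case reduces to this one, or one invokes the appropriate variant), pick $\alpha\in\T$ with $\mu_\alpha$ absolutely continuous with respect to $m$ — which is possible for a.e.\ $\alpha$ by \cite[Theorem 24.19]{DBR2} — and conclude from Theorem~\ref{thm-densite-ma-rigid} that $F_\alpha^2$ is \emph{not} rigid, where $F_\alpha=a/(1-\overline\alpha b)$. Non-rigidity produces a non-trivial $H^1$ function with the same argument as $F_\alpha^2$ that is not a positive multiple of it; I would use this to manufacture an outer function $f\in\mathcal H(b)$ together with a non-zero $h\in\mathcal H(b)$ orthogonal to every $fk_{\lambda_n}$. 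The natural candidate for $f$ is something like $f=1-\overline\alpha b$ (or a related combination $1-cb$), whose cyclicity was completely settled in Corollary~\ref{Des1mcbcyclSb}: by part (ii) of that corollary, $1-\overline\alpha b$ is cyclic iff $\mu_\alpha$ is absolutely continuous with respect to $m$. So I would instead locate the obstruction inside the family $\{1-cb:|c|=1\}$: non-density of $aH^2$ must be detected by some $\alpha$ for which $\mu_\alpha$ is \emph{not} absolutely continuous (again via Theorem~\ref{thm-densite-ma-rigid} and the structure of the $\mu_\alpha$), and for that $\alpha$ the outer function $f=1-\overline\alpha b\in\mathcal H(b)$ fails to be cyclic, hence by Corollary~\ref{CaraCyclSbSpanHb} the family $(fk_{\lambda_n})_{n\ge1}$ fails to be complete for the given sequence, contradicting $(i)$.

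The main obstacle I anticipate is the bookkeeping around whether $1-\overline\alpha b$ genuinely lies in $\mathcal H(b)$ and is outer for the relevant $\alpha$, and more importantly making the logical chain ``$aH^2$ not dense $\Rightarrow$ some $\mu_\alpha$ not absolutely continuous $\Rightarrow$ some outer $1-cb$ not cyclic'' airtight — this requires knowing that the failure of rigidity of $F_\alpha^2$ can be arranged to coincide with the failure of absolute continuity of $\mu_\alpha$, which is exactly the content one has to extract from Theorem~\ref{thm-densite-ma-rigid} and the relation \eqref{sdssdqsdsqdqs22391343ZM23} between $|F_\alpha|^2$ and $d\mu_\alpha^{(a)}/dm$. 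A secondary technical point is reducing to $b(0)=0$: if $b(0)\neq0$ one replaces $b$ by a disc-automorphism-composed version or argues that the cyclicity problem is invariant under the relevant normalization, and I would want to state this reduction carefully rather than wave it through. Modulo these points, everything else is an assembly of results already proved in the paper — Theorem~\ref{MHypfcyclSb}, Corollaries~\ref{CaraCyclSbSpanHb} and~\ref{Des1mcbcyclSb}, Lemma~\ref{SpanLatSb}, and the Aleman--Malman formula \eqref{thm-Aleman-Malmann}.
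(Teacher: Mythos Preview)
Your proposal has a genuine gap in each direction.

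For $(i)\Rightarrow(ii)$ you are working far too hard, and the route you sketch does not close. The paper's argument is one line: apply $(i)$ to the outer function $f=a$ itself. Then $\bigvee(ak_{\lambda_n}:n\ge1)\subset aH^2\subset\mathcal H(b)$, and since the left side is dense by hypothesis, so is $aH^2$. Your contrapositive via rigidity and Clark measures has a real logical hole: Theorem~\ref{thm-densite-ma-rigid} says that (for $b(0)=0$ and $\mu_\alpha\ll m$) non-density of $aH^2$ is equivalent to \emph{non-rigidity of $F_\alpha^2$}, not to the existence of some $\alpha$ with $\mu_\alpha$ singular. So the chain ``$aH^2$ not dense $\Rightarrow$ some $\mu_\alpha$ not absolutely continuous $\Rightarrow$ $1-\overline\alpha b$ not cyclic via Corollary~\ref{Des1mcbcyclSb}'' is not supported by the cited results, and you yourself flag this as the main obstacle without resolving it.

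For $(ii)\Rightarrow(i)$ your setup matches the paper --- invoke Lemma~\ref{SpanLatSb} to get $S_b$-invariance of $\mathcal E=\Span_{\mathcal H(b)}(fk_{\lambda_n})$, then write $\mathcal E=\mathcal E_\psi$ via \eqref{thm-Aleman-Malmann}, and aim for $aH^2\subset\mathcal E$. But your plan to verify $ah/\psi\in H^2$ and $(ah/\psi)\psi^+\in H^2$ ``directly'' is not workable as stated: knowing only that $\psi$ is outer gives no control on $a/\psi$, so there is no reason $ah/\psi$ should land in $H^2$ for arbitrary $h\in H^2$. The paper sidesteps any analysis of $\psi$ by a two-step approximation. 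First, since $a\mathcal H(b)\subset aH^2\subset\mathcal H(b)$, the function $a$ is a multiplier of $\mathcal H(b)$, and one checks from the description \eqref{thm-Aleman-Malmann} that $a\mathcal E_\psi\subset\mathcal E_\psi$. Second, given $h\in H^2$, use Corollary~\ref{cor:completeness-Hardy} to choose $q\in\bigvee(k_{\lambda_n})$ with $\|h-qf\|_2$ small; then $aqf\in a\mathcal E\subset\mathcal E$, so pick $p\in\bigvee(k_{\lambda_n})$ with $\|aqf-pf\|_b$ small; finally the contractive containment \eqref{eq:Macontractivelycontained} gives $\|ah-aqf\|_b\le\|h-qf\|_2$, and the triangle inequality yields $ah\in\mathcal E$. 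This is the missing mechanism in your sketch.
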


\begin{proof}
$(ii)\implies (i)$: Let $f$ be an outer function in $\mathcal H(b)$. Without loss of generality, we may assume that for all $n \ge 1$, $\lambda_n \neq 0$. Since $aH^2$ is assumed to be dense in $\mathcal H(b)$, it is sufficient to show that 
\begin{equation}\label{eq:3433SFSFSFS}
aH^2\subset\Span_{\mathcal H(b)}(fk_{\lambda_n}:n\geq 1),
\end{equation}
 to get that $(fk_{\lambda_n})_{n\geq 1}$ is complete in $\mathcal H(b)$. Observe that, according to Lemma~\ref{SpanLatSb}, the closed subspace $\Span_{\mathcal H(b)}(fk_{\lambda_n}:n\geq 1)$ is invariant with respect to $S_b$. Thus, it follows from \eqref{thm-Aleman-Malmann} that 
\[
\Span_{\mathcal H(b)}(fk_{\lambda_n}:n\geq 1)=\mathcal E_\psi=\left \{ g \in \mathcal{H}(b) : \frac{g}{\psi} \in H^2, \frac{g}{\psi}\psi^+ \in H^2 \right\},
\]
for some $\psi\in\mathcal H(b)$. Moreover, since $a \mathcal{H}(b) \subset a H^2 \subset \mathcal{H}(b)$, the function $a$ is a multiplier of $\mathcal H(b)$, and then it is easy to see that $a\mathcal E_\psi\subset\mathcal E_\psi$. In other words, we have 
\begin{equation}\label{eq:invariant-a}
a \Span_{\mathcal H(b)}(f k_{\lambda_n} : n \ge 1) \subset \Span_{\mathcal H(b)}(f k_{\lambda_n} : n \ge 1).
\end{equation}
In order to prove \eqref{eq:3433SFSFSFS}, let $h\in H^2$ and $\varepsilon>0$. According to Corollary~\ref{cor:completeness-Hardy}, there exists $q\in\bigvee(k_{\lambda_n}:n\geq 1)$ such that 
\begin{equation}\label{eq:sdfdsfsddfsd879832883}
\|h-qf\|_2\leq \frac{\varepsilon}{2}.
\end{equation}
Using \eqref{eq:invariant-a}, we see that $aqf\in\Span_{\mathcal H(b)}(fk_{\lambda_n}:n\geq 1)$, and so there exists $p\in\bigvee(k_{\lambda_n}:n\geq 1)$ such that
\begin{equation}\label{eq2:sdfdsfsddfsd879832883}
\|aqf-pf\|_b\leq \frac{\varepsilon}{2}.
\end{equation}
Thus, it follows from \eqref{eq:Macontractivelycontained} that 
\[
\|ah-pf\|_b\leq \|ah-aqf\|_b+\|aqf-pf\|_b\leq \|h-qf\|_2+\|aqf-pf\|_b,
\]
which gives by \eqref{eq:sdfdsfsddfsd879832883} and \eqref{eq2:sdfdsfsddfsd879832883},
\[
\|ah-pf\|_b\leq \frac{\varepsilon}{2}+\frac{\varepsilon}{2}=\varepsilon.
\]
Hence for every $h\in H^2$, $ah\in\Span_{\mathcal H(b)}(fk_{\lambda_n}:n\geq 1)$, which proves \eqref{eq:3433SFSFSFS} and then $(i)$.

$(i)\implies (ii)$: Note that 
\[
\bigvee(ak_{\lambda_n}:n\geq 1)\subset aH^2\subset \mathcal H(b).
\]
Since $a$ is outer, it follows from $(i)$ that $\bigvee(ak_{\lambda_n}:n\geq 1)$ is dense in $\mathcal H(b)$, which implies that $aH^2$ is dense in $\mathcal H(b)$. 
\end{proof}

%
%
\begin{coro}
 Let $(\lambda_n)_{n \ge 1} \subset \D$ satisfying $\sum_{n=1}^\infty (1 - \abs{\lambda_n}) = \infty$. Assume that $b(0)=0$ and let $\alpha\in\T$ such that $\mu_\alpha$ is absolutely continuous with respect to $m$. Then the following assertions are equivalent:
 \begin{enumerate}
 \item[(i)] $\Span_{\mathcal H(b)}(ak_{\lambda_n}:n\geq 1)=\mathcal H(b)$;
  \item[(ii)] the function $F_\alpha^2$ is a rigid function.
 \end{enumerate}
\end{coro}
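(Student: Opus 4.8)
The plan is to recognise this corollary as an immediate combination of Theorem~\ref{EspImadenseSpanHB} and Theorem~\ref{thm-densite-ma-rigid}. The key observation is that assertion $(i)$, although it only concerns the single outer function $f=a$, is in fact equivalent to the density of $aH^2$ in $\mathcal{H}(b)$; once this is established, Theorem~\ref{thm-densite-ma-rigid}, which applies precisely because $b(0)=0$ and $\mu_\alpha$ is absolutely continuous with respect to $m$, identifies that density with the rigidity of $F_\alpha^2$.

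First I would record two facts about $a$: it belongs to $\mathcal{H}(b)$, since $a=a\cdot 1\in aH^2\subset\mathcal{H}(b)$ by \eqref{eq:Macontractivelycontained}, and it is outer by the very definition of the Pythagorean mate. Then I would argue the two implications separately. If $aH^2$ is dense in $\mathcal{H}(b)$, then assertion $(ii)$ of Theorem~\ref{EspImadenseSpanHB} holds, hence so does its assertion $(i)$, i.e. $\Span_{\mathcal H(b)}(fk_{\lambda_n}:n\ge 1)=\mathcal{H}(b)$ for \emph{every} outer $f\in\mathcal{H}(b)$; specialising to $f=a$ gives assertion $(i)$ of the present corollary. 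Conversely, if assertion $(i)$ holds, then since
\[
\bigvee(ak_{\lambda_n}:n\ge 1)\subset aH^2\subset\mathcal{H}(b),
\]
the density of $\bigvee(ak_{\lambda_n}:n\ge 1)$ in $\mathcal{H}(b)$ forces $aH^2$ to be dense in $\mathcal{H}(b)$; this is exactly the trivial inclusion already exploited in the proof of $(i)\Rightarrow(ii)$ of Theorem~\ref{EspImadenseSpanHB}.

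Putting these together, assertion $(i)$ is equivalent to the density of $aH^2$ in $\mathcal{H}(b)$. Since $b(0)=0$ and $\mu_\alpha\ll m$, Theorem~\ref{thm-densite-ma-rigid} equates this density with the rigidity of $F_\alpha^2$, which is assertion $(ii)$, and the proof is complete.

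There is essentially no serious obstacle here: the only point that demands care is the quantifier. Theorem~\ref{EspImadenseSpanHB} is stated with ``for every outer $f$'', whereas the corollary fixes $f=a$, so one must verify that both directions still go through for this single choice — which they do, because $a$ is itself outer (so the forward direction applies) and because the reverse direction only uses the trivial inclusion $\bigvee(ak_{\lambda_n}:n\ge1)\subset aH^2$.
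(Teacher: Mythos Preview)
Your proof is correct and follows essentially the same approach as the paper's own proof: both directions go through the density of $aH^2$ in $\mathcal{H}(b)$ via the inclusion $\bigvee(ak_{\lambda_n}:n\ge 1)\subset aH^2\subset\mathcal{H}(b)$ and Theorem~\ref{EspImadenseSpanHB}, and then Theorem~\ref{thm-densite-ma-rigid} is invoked to translate density into rigidity of $F_\alpha^2$. Your explicit remark about the quantifier (``for every outer $f$'' versus the single choice $f=a$) is a nice clarification but does not change the argument.
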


\begin{proof}
$(i)\implies (ii)$: Since 
\[
\bigvee(ak_{\lambda_n}:n\geq 1)\subset aH^2\subset \mathcal H(b),
\]
it follows from $(i)$ that $aH^2$ is dense in $\mathcal H(b)$. Thus, according to Theorem~\ref{thm-densite-ma-rigid}, $F_\alpha^2$ is a rigid function.

$(ii)\implies (i)$: Assume now that $F_\alpha^2$ is rigid. Hence, using one more time Theorem~\ref{thm-densite-ma-rigid}, the subspace $aH^2$ is dense in $\mathcal H(b)$. Then Theorem \ref{EspImadenseSpanHB} implies that $\Span_{\mathcal H(b)}(ak_{\lambda_n}:n\geq 1)=\mathcal H(b)$. 
\end{proof}

%
%

\begin{rem}
After completing this research work, we learned of the existence of Bergman's article \cite{CyclBG}. We thank him for sharing his preprint with us. Note that his results on the cyclic vectors for $S_b$ are much more general than those obtained in our paper. However, our method is (in a sense) more direct and easier and may therefore be of interest. On the other hand, the link with the completeness problem is also interesting. Finally, note that by combining results of  \cite{CyclBG} and Corollary~\ref{CaraCyclSbSpanHb}, we can obtain some generalizations of Theorem~\ref{MHypfcyclSb}, and get more general sufficient conditions for the completeness of $(fk_{\lambda_n})_{n\geq 1}$ in $\mathcal H(b)$. Since it is just an application of results contained in \cite{CyclBG}, we leave the details to the reader.

%
%
\end{rem}

\bibliographystyle{plain}

\bibliography{sources.bib}

\end{document}